\def\Box{\vcenter{\vbox{\hrule\hbox{\vrule
     \vbox to 8.8pt{\hbox to 10pt{}\vfill}\vrule}\hrule}}}
\newcommand{\tr}{\text{Tr}}
\newcommand{\eproof}{\hfill$\Box$\vspace{4mm}}
\newcommand{\la}{\langle}
\newcommand{\ra}{\rangle}
\newcommand{\F}{{\mathbb F}}
\newcommand{\Q}{{\mathbb Q}}
\newtheorem{thm}{Theorem}
\newtheorem{lemma}[thm]{Lemma}
\newtheorem{remark}[thm]{Remark}
\numberwithin{equation}{section}
\begin{document}

\title{On homogeneous planar functions}
\author{Tao Feng}
\thanks{T. Feng is with Department of Mathematics, Zhejiang University, Hangzhou 310027, Zhejiang, China
(e-mail: tfeng@zju.edu.cn). The work of T. Feng was supported in part by the Fundamental Research Funds for the Central
Universities of China, Zhejiang Provincial Natural Science Foundation (LQ12A01019), National Natural Science
Foundation of China (11201418) and Research Fund for Doctoral Programs from the Ministry of Education of China (20120101120089).  }

\maketitle

\begin{abstract}
Let $p$ be an odd prime and $\F_q$ be the finite field with $q=p^n$ elements. A planar function $f:\F_q\rightarrow\F_q$ is called homogenous if $f(\lambda x)=\lambda^df(x)$ for all $\lambda\in\F_p$ and $x\in\F_q$, where $d$ is some fixed positive integer. We characterize $x^2$ as the unique homogenous planar function over $\F_{p^2}$ up to equivalence.
\end{abstract}

\section{Introduction}
Let $p$ be an odd prime, $n$ be a positive integer and $\F_q$ be the finite field with $q:=p^n$ elements. A function $f:\F_q\rightarrow\F_q$ is called {\it planar} if for any $a\in\F_q^*$, the function \[\Delta_af(x):=f(x+a)-f(x)-f(a)\] is a permutation polynomial.
Two planar functions $f_1$ and $f_2$ are {\it equivalent} if there are linearized permutation polynomials $L_1,\,L_2$, a linearized polynomial $L_3$ and a constant $c$ such that \[f_2(x)=L_2(f(L_1(x)))+L_3(x)+c.\] 
We call a planar function defined over $\F_q$ {\it homogeneous} if $f(\lambda x)=\lambda^d f(x)$ for all $\lambda\in\F_p$ and $x\in\F_q$, where $d$ is a fixed natural number. It is easy to see that $d\equiv2\pmod{p-1}$ according to the classification of planar functions over $\F_p$.
   
The study of planar functions originates from Dembowski and Ostrom's work \cite{DO} on affine planes of order $n$ with a large collineation group of order $n^2$; see also \cite{rs} for a short exposition. A planar function is homogeneous if and only if the corresponding affine plane has some extra automorphism group of order $p-1$.
There have been extensive studies in the literature on this topic since the work of Dembowski and Ostrom. A planar function is called DO type if it can be written in the form
\[
f(x)=\sum_{0\leq i\leq j\leq n-1}a_{ij}x^{p^i+p^j},\;a_{ij}\in\F_{q}.
\]
Such planar functions are closely related to semifields, and we refer the reader to \cite{bh,fs} for a most recent survey on this topic, where the reader can also find a list of all known planar functions and some references. For the combinatorial aspects of DO planar functions, we refer to \cite{WQWX,wz}.
 
Up to equivalence the only known planar functions not of DO type are due to Coulter-Matthews \cite{CM}, which are monomials in characteristic three. This disproves a conjecture of Dembowski and Ostrom which claims that all planar functions must be of DO type. Still, the conjecture remains open for any characteristic greater than three.
All the known planar functions are equivalent to one of DO type or Coulter-Matthews type, and planar functions of these two types are homogenous in our sense.  Most recently, the notion of planar functions have been defined and studied in the even characteristic in a series of papers, c.f. \cite{z1,z2,ky,yz,y_th}.

The planar functions over $\F_p$ have been classified in the monomial case by Johnson \cite{johnson} and in full by \cite{gluck,hiramine,rs} independently.  This contributes to the famous conjecture that all projective planes of prime order is desarguesian. It is natural to consider the classification of all planar functions over $\F_{p^2}$, but this turns out to be very difficult. The proof of \cite{hiramine} makes use of Hermite criterion for permutation polynomials, which has been followed by \cite{rc_ps,cl_p4} to classify all planar monomials over $\F_{p^2}$ and $\F_{p^4}$. Recently, all planar functions over $\F_q$  of the form $x^n$  with  $(n-1)^4 \le q$  have been classified in \cite{z3}.
It is the aim of this note to classify the homogeneous planar functions defined over $\F_{p^2}$. Our proof follows the same spirit of \cite{rs}.
\begin{thm}\label{thm_main}
Let $p$ be an odd prime and $f$ be a planar function defined over $\F_{p^2}$ such that $f$ is homogeneous, i.e., $f(\lambda x)=\lambda^df(x)$ for all $\lambda\in\F_p$ and $x\in\F_{p^2}$, where $d$ is some fixed positive integer. Then up to equivalence, we must have $f(x)=x^2$.
\end{thm}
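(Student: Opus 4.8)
The plan is to use homogeneity to reduce $f$ to a short combination of power functions and then to impose planarity through a Gauss-sum identity. Since $d\equiv 2\pmod{p-1}$ we have $\lambda^{d}=\lambda^{2}$ for $\lambda\in\F_p^{*}$, so homogeneity reads $f(\lambda x)=\lambda^{2}f(x)$ for all $\lambda\in\F_p$; in particular $f(0)=0$. Writing $f(x)=\sum_{e}c_{e}x^{e}$ and comparing with $f(\lambda x)=\lambda^{2}f(x)$ forces $c_{e}=0$ unless $e\equiv 2\pmod{p-1}$, so $f$ is supported on the $p+1$ exponents $e_{k}=2+k(p-1)$, $k=0,1,\dots,p$. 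Equivalently, $f$ is determined by its values $v_{i}=f(g^{i})$ on a transversal $g^{0},\dots,g^{p}$ of the $p+1$ lines (one-dimensional $\F_p$-subspaces) through the origin, via $f(tg^{i})=t^{2}v_{i}$. A short computation shows the Frobenius $x\mapsto x^{p}$ permutes the exponents by $e_{k}\mapsto e_{2-k}$ (indices mod $p+1$), pairing $x^{2}$ with $x^{2p}$; I record this because a linearized permutation $L_{2}(y)=\alpha y+\beta y^{p}$ applied after $f$, together with a scaling $L_{1}(x)=\mu x$ before $f$, are the only useful equivalence moves here (the additive $L_{3}$ and the constant $c$ are inert, since $1,p\not\equiv 2\pmod{p-1}$).

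Next I would encode planarity as a character-sum condition. Fix the canonical additive character $\psi$ of $\F_{p^{2}}$ and write $\psi(z)=\chi(\tr(z))$ with $\chi$ the canonical character of $\F_p$. By the standard perfect-nonlinearity criterion, $f$ is planar iff $|S(a,b)|^{2}=p^{2}$ for all $a\in\F_{p^{2}}^{*}$ and all $b\in\F_{p^{2}}$, where $S(a,b)=\sum_{x}\psi(af(x)+bx)$. Splitting $x$ by its direction and using $f(tg^{i})=t^{2}v_{i}$ turns each on-line sum into a quadratic Gauss sum over $\F_p$:
\[ S(a,b)=\sum_{i=0}^{p}\ \sum_{t\in\F_p}\chi\bigl(A_{i}t^{2}+B_{i}t\bigr),\qquad A_{i}=\tr(av_{i}),\quad B_{i}=\tr(bg^{i}). \]
Each inner sum has modulus $\sqrt p$ when $A_{i}\neq 0$ and equals $p\,[B_{i}=0]$ when $A_{i}=0$, so planarity becomes an exact-modulus constraint on a sum of $p+1$ quadratic Gauss sums.

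The first payoff comes from $b=0$, where $B_{i}=0$ for every $i$ and the constraint collapses to $|mp+GT|^{2}=p^{2}$, with $m=|\{i:\tr(av_{i})=0\}|$, $T=\sum_{i}\eta(\tr(av_{i}))$ ($\eta$ the quadratic character) and $G$ the quadratic Gauss sum; because $p$ is not a perfect square, integrality of $m$ and $T$ forces $m=1$ (and $T=0$) for every $a\neq 0$. Reading $\tr(a\,\cdot)=0$ as a line through the origin and letting $a$ vary, this says each of the $p+1$ lines contains exactly one $v_{i}$: all $v_{i}$ are nonzero and lie in pairwise distinct directions, so $f$ induces a bijection $\sigma$ of the $p+1$ directions, exactly as the model $x^{2}$ does (there $\sigma(i)=2i$). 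After normalizing by the available scalings and the Frobenius twist so that $f$ agrees with $x^{2}$ on one or two directions, I would feed the remaining conditions ($b\neq 0$, which couple the two $\F_p$-coordinates of $\F_{p^{2}}$) back into the Gauss-sum identity to pin down $\sigma$ and all the $v_{i}$, concluding $f(x)=x^{2}$ up to equivalence. An equivalent endgame is to show the polarization $B_{f}(x,y)=f(x+y)-f(x)-f(y)$ is $\F_p$-bilinear, so that $f$ is of DO type and hence arises from a commutative presemifield of order $p^{2}$, which is necessarily the field $\F_{p^{2}}$.

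The main obstacle is this last step: the $b=0$ slice only rigidifies $f$ up to the combinatorial data of $\sigma$ and the scalars $v_{i}$, and ruling out the non-quadratic possibilities needs the full force of the $b\neq 0$ identities, where the quadratic Gauss sums on different lines interfere and must be controlled simultaneously. This is precisely where multiplicative homogeneity and the additive permutation condition pull against each other, the usual source of difficulty in classifying planar functions. A separate small check is required in characteristic $3$, where the Coulter--Matthews exponent must be shown to collapse to $x^{2}$ over $\F_{9}$ (its exponent reduces to a Frobenius twist of $2$ modulo $p^{2}-1$), so that no exotic solution is overlooked.
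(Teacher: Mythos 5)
There are two genuine problems with your proposal. First, the $b=0$ computation contains an error with real consequences. Decomposing $S(a,0)=\sum_{x}\psi(af(x))$ along the $p+1$ directions, the point $x=0$ is overcounted, and the correct identity is $|(m-1)p+GT|^{2}=p^{2}$, not $|mp+GT|^{2}=p^{2}$; integrality then forces $T=0$ and $m\in\{0,2\}$, not $m=1$. This matches Lemma~\ref{lem_x*}(1) of the paper ($|Z_a|\in\{0,2\}$) and is consistent with Lemma~\ref{lem_1}(2): $f$ is $2$-to-$1$ on directions, covering exactly half of the $p+1$ cosets of $\F_p^{*}$ twice each. Your claimed conclusion --- that $f$ induces a bijection $\sigma$ of the $p+1$ directions --- is false even for your own model $f(x)=x^{2}$: on the cyclic group $\F_{p^2}^{*}/\F_p^{*}$ of even order $p+1$, the map $i\mapsto 2i$ is $2$-to-$1$, not a bijection. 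So the combinatorial structure you propose to ``pin down'' in the endgame does not exist.

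Second, and more fundamentally, the endgame is exactly the part you leave unproved. You correctly identify that the $b\neq 0$ constraints, where the quadratic Gauss sums on different lines interfere, carry all the remaining information, but you offer no mechanism to exploit them. The paper's mechanism is geometric: from the $b\neq 0$ character sums it extracts the exact intersection multiplicities $n_i$ (Lemma~\ref{lem_c}), uses them to show that the point set $S_t=\{\la \tr(tx)^2,\tr(f(x)),\tr(\beta f(x))\ra\}$ is an oval in $PG(2,p)$, invokes Segre's theorem to conclude $S_t$ is a conic, and then compares coefficients of the resulting quadratic relations as polynomials in $t$ (of degree at most $4p<p^{2}-1$) to force $f(x)=C_7x^{2}+C_8x^{2p}$ and finally $f(x)=x^{2}$. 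Without some substitute for this step --- and your suggested alternative of proving $B_f$ is bilinear is itself a restatement of the problem, not a proof --- the proposal does not establish the theorem. (Your side remark about characteristic $3$ is also moot: the paper disposes of $p=3,5,7$ by a finite check after the normalization of Remark~\ref{rem_val}.)
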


\section{Preliminaries}
Throughout this note, we assume that $p$ is an odd prime and $f$ is a homogeneous planar functions defined over $\F_{p^2}$.  Let $T$ be a complete set of coset representatives of $\F_p^*$ in $\F_{p^2}^*$. Each element of $\F_{p^2}^*$ can be written uniquely as $xy$ with $x\in\F_{p^*}$ and $y\in T$. Also, take $\beta$ to be an element of $\F_{p^2}^*$ such that $\beta^{p-1}=-1$. Then clearly $\beta^2$ is a nonsquare in $\F_p^*$. We write $\tr_{\F_{p^2}/\F_p}$ as $\tr$ for short, and define
 \[
 Z_a:=\{z\in T|\tr(af(z))=0\},\;\forall\, a\ne 0.
 \]
We collect some basic observations in the following lemma.
\begin{lemma}\label{lem_1} With the above notations, the following holds.
\begin{enumerate}
\item For each $b\in \F_{p^2}^*$, there is a unique element $y_b\in T$ such that $\tr(by_b)=0$.
\item $f(0)=0$, $f(-x)=f(x)$ for any $x\in\F_{p^2}$, and $f$ is a $2$-to-$1$ map on $\F_{p^2}^*$. In particular, for distinct $y,y'\in T$, $f(y')f(y)^{-1}$ can not be a square of $\F_p^*$.
\item $d\equiv 2\pmod{p-1}$.
\end{enumerate}
We assume that $f(1)=1$ from now on.
\end{lemma}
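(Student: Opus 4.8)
My plan is to treat the three parts in the order (1), (3), (2), because the symmetry $f(-x)=f(x)$ and the clean fibre count in (2) both depend on $d$ being even, which I will only have in hand after proving (3). The identity $f(0)=0$ is immediate: set $\lambda=0$ in $f(\lambda x)=\lambda^d f(x)$ and use $d\ge 1$. For part (1) I would argue by linear algebra over $\F_p$. Regarding $\F_{p^2}$ as a two-dimensional $\F_p$-space, for a fixed $b\in\F_{p^2}^*$ the map $z\mapsto\tr(bz)$ is a nonzero $\F_p$-linear functional --- nonzero because $\tr$ is onto and multiplication by $b$ is a bijection --- so its kernel is a one-dimensional subspace whose $p-1$ nonzero elements constitute a single coset of $\F_p^*$. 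The unique representative of that coset lying in $T$ is the desired $y_b$, and uniqueness follows since any two representatives inside the kernel would be $\F_p^*$-proportional, hence equal in $T$.

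For part (3), which I expect to be the crux, the idea is to extract an honest planar monomial on the prime field. First I would note $f(1)\ne0$: if $f(1)=0$ then $f(\lambda)=\lambda^d f(1)=0$ for all $\lambda\in\F_p$, whence $\Delta_1 f$ vanishes at both $0$ and $1$, contradicting its injectivity. Now I restrict to $a\in\F_p^*$ and $x\in\F_p$, where homogeneity yields $\Delta_a f(x)=\big[(x+a)^d-x^d-a^d\big]f(1)$, a quantity confined to the line $\F_p\cdot f(1)$. Since the global map $\Delta_a f$ is a bijection of $\F_{p^2}$ it is in particular injective on $\F_p$, and as multiplication by $f(1)\ne0$ is injective, the map $x\mapsto(x+a)^d-x^d-a^d$ is injective, hence bijective, on $\F_p$ for every $a\in\F_p^*$. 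This says precisely that $x^d$ is planar on $\F_p$, and the monomial classification over the prime field \cite{johnson} then forces $d\equiv2\pmod{p-1}$. The genuine obstacle here is conceptual rather than computational: one must realise that the only directly exploitable data is the behaviour of $f$ on the single line $\F_p$, and that the restriction of $\Delta_a f$ stays inside a line exactly when $a\in\F_p^*$, so that $\F_{p^2}$-planarity degenerates to $\F_p$-planarity.

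Part (2) I would finish by a fibre count. As $p$ is odd, $p-1$ is even, so $d\equiv2\pmod{p-1}$ makes $d$ even and $f(-x)=(-1)^d f(x)=f(x)$. Writing $N(c)=|f^{-1}(c)|$ and $q=p^2$, a standard planarity computation gives $\sum_c N(c)(N(c)-1)=q-1$: if $f(x)=f(y)$ with $x\ne y$ then $\Delta_{x-y}f(y)=-f(x-y)$, and for each $a=x-y\ne0$ the bijectivity of $\Delta_a f$ produces a unique $y$, so there are exactly $q-1$ coincident ordered pairs. Using $f(-x)=f(x)$ to write $N(0)=1+2k$ and $N(c)=2n_c$ for $c\ne0$, the two relations $\sum_c N(c)=q$ and $\sum_c N(c)(N(c)-1)=q-1$ combine into $k^2+\sum_{c\ne0}n_c(n_c-1)=0$, forcing $k=0$ and each $n_c\in\{0,1\}$. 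Hence $0$ is the sole preimage of $0$ and every nonzero value has exactly the two preimages $\{x,-x\}$, i.e. $f$ is $2$-to-$1$ on $\F_{p^2}^*$. For the final assertion, suppose $f(y')=s^2 f(y)$ for distinct $y,y'\in T$ and some $s\in\F_p^*$; since $d\equiv2\pmod{p-1}$ gives $s^d=s^2$, we get $f(sy)=s^d f(y)=f(y')$, so the fibre description yields $y'=\pm sy$. But then $y'\in y\F_p^*$, contradicting that $y,y'$ are distinct coset representatives. Therefore $f(y')f(y)^{-1}$ is a non-square, which completes the lemma.
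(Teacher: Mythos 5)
Your proof is correct, and for parts (1) and (3) and the non-square assertion in (2) it is essentially the paper's argument: part (3) in both cases reduces to planarity of the monomial $x^d$ on the line $\F_p$ (you argue the contrapositive direction, the paper the direct one, but the content --- restriction of $\Delta_a f$ to $\F_p$ plus the classification of planar monomials over the prime field --- is identical), and the final non-square claim is handled by the same $f(sy)=f(y')\Rightarrow y'=\pm sy$ trick. The one genuine difference is the first half of (2): the paper simply cites \cite{KP,WQWX} for $f(0)=0$, $f(-x)=f(x)$ and the $2$-to-$1$ property, whereas you prove it from scratch via the standard fibre count $\sum_c N(c)(N(c)-1)=q-1$ combined with the parity constraints $N(0)=1+2k$, $N(c)=2n_c$, yielding $k^2+\sum_{c\ne 0}n_c(n_c-1)=0$. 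That computation is correct and makes the lemma self-contained. Your reordering (1), (3), (2) is also a small improvement in rigor: the paper's proof of the non-square claim in (2) silently uses $u^d=u^2$, i.e.\ part (3), which it only proves afterwards, and your explicit check that $f(1)\ne 0$ closes a step the paper leaves implicit.
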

\begin{proof}
(1) is trivial, and the first half of (2) is proved in \cite{KP,WQWX}. If $f(y')f(y)^{-1}=u^2$ for some $u\in\F_p^*$ and $y,y'\in T$, then $f(y')=f(uy)$ which implies $y'=\pm uy$ and thus $y'=y$. This proves the second half of (2). It follows that the multiset $\{f(y)\F_p^*|y\in T\}$ covers each coset of $\F_{p^2}^*/\F_p^*$ at most twice.

 Assume that (3) is false. Then according to the classification of (monomial) planar functions over $\F_p$ in \cite{johnson,gluck,hiramine,rs}, $x^d$ is not a planar function over $\F_{p}$, and so there exist two distinct elements $x,y\in\F_p$ such that $(x+1)^d-x^d=(y+1)^d-y^d$. It follows easily that $\Delta_1f(x)=\Delta_1f(y)$, which contradicts the fact that $f$ is planar.
\end{proof}

We set $\omega:=\exp(2\pi \sqrt{-1}/p)$. Let $\psi$ (resp. $\psi_2$) be the canonical additive character of $\F_p$ (resp. $\F_{p^2}$) defined by $\psi(x)=\omega^x$ (resp. $\psi_2(x)=\omega^{\tr(x)}$) for each $x\in\F_p$ (resp. $\F_{p^2}$).  Write $\delta_z$ for the Kronecker symbol which takes $0$ if $z\in\F_p^*$ and $1$ if $z=0$, and $ (\frac{\cdot}{p} )$ for the Legendre symbol modulo $p$.  We record here some well-known facts that we will use below.
\begin{lemma}\label{lem_sum}
\begin{enumerate}
\item   The Gauss sum $\sum_{x\in \F_p}\psi(x^2)=\sum_{x\in \F_p}\psi(x)(\frac{x}{p})=\sqrt{p^*}$,  where $p^*= (\frac{-1}{p} )p$.
\item $\sum_{i=0}^{p-1}(\frac{i}{p})=0$, and $\sum_{i=0}^{p-1}(\frac{i(i-l)}{p})=p\delta_l-1$ for each $l\in\F_p$.
\end{enumerate}
\end{lemma}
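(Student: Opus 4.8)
The plan is to handle the two parts separately, since the Legendre-symbol identities in (2) are elementary and the only deep ingredient is the sign determination in (1). For the first equality in (1) I would count square roots. For $t\in\F_p^*$ the equation $x^2=t$ has $1+(\frac{t}{p})$ solutions (two if $t$ is a square, none otherwise), while $x^2=0$ has the single solution $x=0$. Grouping $\sum_{x\in\F_p}\psi(x^2)$ according to the value $t=x^2$ therefore gives $1+\sum_{t\in\F_p^*}\psi(t)\bigl(1+(\frac{t}{p})\bigr)$. Since $\sum_{t\in\F_p}\psi(t)=0$, the linear part contributes $-1$, which cancels the isolated $1$; and because $(\frac{0}{p})=0$ the surviving term is exactly $\sum_{t\in\F_p}\psi(t)(\frac{t}{p})$. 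This identifies the two expressions as the quadratic Gauss sum, which I abbreviate $g$.

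To evaluate $g$ I would first compute $g^2=\sum_{s,t\in\F_p^*}\psi(s+t)(\frac{st}{p})$, where only nonzero $s,t$ matter since $(\frac{0}{p})=0$. Substituting $s=tu$ with $u\in\F_p^*$ (a bijection of the pairs) turns the exponent into $t(1+u)$ and the symbol into $(\frac{t^2u}{p})=(\frac{u}{p})$. Summing over $t\in\F_p^*$ first yields $p-1$ when $u=-1$ and $-1$ otherwise, so using $\sum_{u\in\F_p^*}(\frac{u}{p})=0$ everything collapses to $g^2=(\frac{-1}{p})p=p^*$, whence $g=\pm\sqrt{p^*}$. The hard part is showing that the sign is $+$: this is precisely Gauss's classical theorem on the sign of the quadratic Gauss sum, which under the branch convention $\sqrt{p^*}=\sqrt p$ for $p\equiv1\pmod4$ and $\sqrt{p^*}=i\sqrt p$ for $p\equiv3\pmod4$ gives the stated value. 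I would invoke this as a known result rather than reprove it, since its elementary proofs (via the finite Fourier transform or Dirichlet's evaluation) are long and standard.

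For (2), the first identity is immediate: $(\frac{0}{p})=0$ and $\F_p^*$ contains exactly $(p-1)/2$ squares and $(p-1)/2$ nonsquares, so the $+1$ and $-1$ contributions cancel. For the second, I would discard the vanishing $i=0$ term and, for $i\in\F_p^*$, write $(\frac{i(i-l)}{p})=(\frac{i^2}{p})(\frac{1-li^{-1}}{p})=(\frac{1-li^{-1}}{p})$. When $l=0$ this is constantly $1$, giving $p-1=p\delta_0-1$. When $l\ne0$ the map $i\mapsto 1-li^{-1}$ carries $\F_p^*$ bijectively onto $\F_p\setminus\{1\}$, so the sum becomes $\sum_{k\in\F_p}(\frac{k}{p})-(\frac{1}{p})=0-1=-1=p\delta_l-1$ by the first identity. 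Both cases of $\delta_l$ thus yield the claimed value, completing the lemma once the Gauss-sum sign is granted.
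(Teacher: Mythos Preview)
Your argument is correct in every detail: the root-counting gives the first equality in (1), the substitution $s=tu$ cleanly yields $g^2=p^*$, and your treatment of (2) via the bijection $i\mapsto 1-li^{-1}$ on $\F_p^*\to\F_p\setminus\{1\}$ is the standard elementary proof. The one point you defer---the sign of the quadratic Gauss sum---is exactly the nontrivial input, and citing it is appropriate.

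As for comparison with the paper: the paper does not actually prove this lemma at all. Its entire proof consists of two citations, pointing to \cite{gjs} for part (1) and to \cite{ln} for part (2). So your proposal is strictly more informative than what appears in the paper; you supply the elementary arguments that the paper outsources to textbooks, while still (like the paper, implicitly) relying on a reference for Gauss's sign theorem.
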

\begin{proof}
(1) is \cite[Theorem 1.2.4]{gjs}, and (2) is Eqn. (5.12) and Theorem 5.48 in \cite{ln}.
\end{proof}

For a fixed $a\ne 0$, we now define the following exponential sum
\begin{align*}
W(a,b):=\sum_{z\in\F_{p^2}}\psi_2(af(z)+bz)=\sum_{y\in T}\sum_{x\in\F_{p}}\psi(x^2\tr(af(y))+x\tr(by))-p.
\end{align*}
It is proved in \cite{pott,PZ} that there exist $\epsilon\in\{\pm 1\}$ and $0\le l\le p-1$ depending on $(a,b)$ such that $W(a,b)=\epsilon \omega^l p$.
 We look at the inner summation, and distinguish two cases:
\begin{enumerate}
\item $\tr(af(y))=0$. The inner sum is $p\delta_{\tr(by)}$.
\item $\tr(af(y))\ne0$. The inner sum is $\psi\left(\frac{(\tr(by))^2}{-4\tr(af(y))}\right)\left(\frac{\tr(af(y))}{p}\right)\sqrt{p^*}$ after simple calculations.
\end{enumerate}
\begin{lemma}\label{lem_x*}With the above notations, the following holds.
\begin{enumerate}
\item For each $a\ne 0$, $Z_a$ has size $0$ or $2$.
\item For each $x\ne 0$, there exist a unique $x^\ast$ (up to a $\pm$ sign) such that $f(x)=\beta^{2}f(x^\ast)$.
\end{enumerate}
\end{lemma}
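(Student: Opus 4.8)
The plan is to derive both statements from the evaluation of $W(a,0)$, using the recorded fact that $|W(a,b)|=p$ whenever $a\ne 0$. First I would set $b=0$ in the displayed formula for $W(a,b)$. Then $\tr(by)=0$ for every $y\in T$, so the inner sum equals $p$ when $y\in Z_a$ and $\left(\frac{\tr(af(y))}{p}\right)\sqrt{p^*}$ otherwise. Writing $s:=|Z_a|$ and $S:=\sum_{y\in T\setminus Z_a}\left(\frac{\tr(af(y))}{p}\right)\in\mathbb{Z}$, this gives
\[
W(a,0)=(s-1)p+\sqrt{p^*}\,S.
\]
I would then impose $|W(a,0)|^2=p^2$ and exploit that $\sqrt{p^*}$ is real when $p\equiv 1\pmod 4$ and purely imaginary when $p\equiv 3\pmod 4$, whereas $(s-1)p$ and $S$ are rational integers. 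When $p\equiv1\pmod4$, comparing rational and irrational parts forces $(s-1)S=0$ together with $(s-1)^2p+S^2=p$; when $p\equiv3\pmod4$ the real and imaginary parts are orthogonal and one gets $(s-1)^2p+S^2=p$ directly. In either case $s=1$ would force $S^2=p$, which is impossible, while $(s-1)^2\le 1$ leaves only $s\in\{0,2\}$, proving (1).

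For (2) I would first reinterpret (1) in terms of cosets. Since a nonzero $w$ satisfies $\tr(w)=0$ exactly when $w^{p-1}=-1$, i.e.\ when $w\in\beta\F_p^*$, and since $f(z)\ne0$ for $z\ne0$, the condition $\tr(af(z))=0$ is equivalent to $f(z)\in a^{-1}\beta\F_p^*$. Thus $Z_a$ counts the $z\in T$ whose $f$-value lies in a prescribed coset of $\F_p^*$, and as $a$ ranges over $\F_{p^2}^*$ this coset ranges over all $p+1$ cosets. Hence statement (1) says precisely that the map $T\to\F_{p^2}^*/\F_p^*$, $z\mapsto f(z)\F_p^*$, hits each coset either $0$ or $2$ times.

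Given $x\ne0$, write $x=x_0y$ with $x_0\in\F_p^*$ and $y\in T$, so that $f(x)=x_0^2f(y)$ lies in the coset $f(y)\F_p^*$. By the reinterpreted (1) this coset is hit exactly twice (it is hit at least once, by $y$), say by $y$ and a second representative $y'\in T$ with $y'\ne y$. Then $f(y')=c\,f(y)$ for some $c\in\F_p^*$, and by the second half of Lemma~\ref{lem_1}(2) this $c$ is a nonsquare. Since $\beta^2$ is also a nonsquare, $c=\beta^2u^2$ for some $u\in\F_p^*$. Setting $x^\ast:=(x_0/(\beta^2u))\,y'$ one checks directly that $\beta^2f(x^\ast)=f(x)$, giving existence. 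For uniqueness, any solution $x^\ast=x_0^\ast y^\ast$ satisfies $f(x^\ast)\F_p^*=f(x)\F_p^*$, so $y^\ast\in\{y,y'\}$; the choice $y^\ast=y$ would force $\beta^2$ to be a square, so $y^\ast=y'$ is forced, after which $x_0^\ast$ is pinned down up to sign.

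The main obstacle is the first step: extracting $s\in\{0,2\}$ from the single scalar constraint $|W(a,0)|=p$. What makes it go through is that $(s-1)p$ and $S$ are rational integers while $\sqrt{p^*}$ is irrational and either real or purely imaginary according to $p\bmod 4$, so the norm equation decouples into independent rational constraints. Once (1) is established, the remainder of (1) and all of (2) reduce to bookkeeping with cosets of $\F_p^*$ and the square/nonsquare dichotomy, where the nonsquareness of $\beta^2$ is exactly what matches the nonsquare ratio $f(y')f(y)^{-1}$ supplied by Lemma~\ref{lem_1}(2).
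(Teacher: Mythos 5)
Your proposal is correct and follows essentially the same route as the paper: part (1) is obtained by evaluating $W(a,0)$ via the Gauss sum and exploiting that $(|Z_a|-1)p$ and the Legendre-symbol sum are rational integers while $\sqrt{p^*}$ is irrational, and part (2) is the same coset/nonsquare argument (the paper phrases it by choosing $a$ with $\tr(af(x))=0$ so that $Z_a$ supplies the second coset representative, which is exactly your reinterpretation of (1)). The only cosmetic difference is that you separate real and imaginary parts of $|W(a,0)|^2=p^2$ directly instead of first concluding $W(a,0)=\pm p$ from membership in $\Q(\sqrt{p^*})$.
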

\begin{proof}
 To prove (1), we use the above observation to compute that
\begin{align*}
W(a,0)=\sqrt{p^*}\sum_{y\in T\setminus Z_a } \left(\frac{\tr(af(y))}{p}\right)+p|Z_a|-p.
\end{align*}
This number falls in $\Q(\sqrt{p^*})$, and has modulus $p$, so must be $\pm p$; we refer the reader to \cite{ir} for such basic facts from algebraic number theory. Therefore, we get $\sum_{y\in T\setminus Z_a } \left(\frac{\tr(af(y))}{p}\right)=0$, and $|Z_a|-1\in\{1,-1\}$, i.e., $|Z_a|\in\{0,2\}$. This proves (1).

To prove (2), we take an element $a\ne 0$ such that  $\tr(af(x))=0$. Then $Z_a$ contains one element $T\cap x\F_p^*$ and thus another element $y\in T$. Now from $\tr(af(x))=\tr(af(y))=0$, we have that $f(y)f(x)^{-1}$ is in $\F_p^*$. It is a nonsquare in $\F_p^*$  by Lemma \ref{lem_1}. The claim now follows from the homogeneity of $f$ and the fact that $\beta^2$ is a nonsquare in $\F_p^*$.
\end{proof}

\begin{remark}\label{rem_val} Since $f(1)=1$, Lemma \ref{lem_x*} shows that there must be an element $u\in\F_{p^2}\setminus \F_p$ such that $f(u)=\beta^2$. The general linear group $GL(2,p)$ acts on the set of basis of $\F_{p^2}$ over $\F_p$ transitively, so there exist linearized permutation polynomials $L_1,\,L_2$ such that
\[
L_1(1)=1,\; L_1(\beta)=u;\quad L_2(1)=1,\;L_2(f(\gamma))=\gamma^2,
\]
where $\gamma$ is a primitive element of $\F_{p^2}$.
It follows that $g(x)=L_2(f(L_1(x)))$ is a homogeneous planar function equivalent to $f(x)$, and $g(1)=1$, $g(\beta)=\beta^2$ and $g(\gamma)=\gamma^2$. Therefore, up to equivalence, {\bf we assume  that
\begin{align}\label{eqn_val}
f(1)=1,\; f(\beta)=\beta^2,\; f(\gamma)=\gamma^2.
\end{align}
}\noindent Using  Lemma \ref{lem_x*}, we conducted a computer search which  took seconds to check that a homogeneous planar function with these three prescribed values is exactly $x^2$ when $p=3,5,7$. Therefore, {\bf we assume that $p\ge 11$ below}.
\end{remark}

\begin{lemma}\label{lem_zero}
For $x\ne 0$, $\tr(\beta f(x))=0$ if and only if $x\in\F_p^*\cup\beta\F_p^*$.
\end{lemma}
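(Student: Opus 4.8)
The plan is to recast the vanishing of $\tr(\beta f(x))$ as a statement about cosets of $\F_p^*$ and then pin down $Z_\beta$ using Lemma~\ref{lem_x*}(1). Since $f$ is homogeneous with $d\equiv 2\pmod{p-1}$, for any $\lambda\in\F_p^*$ we have $f(\lambda x)=\lambda^2 f(x)$, hence $\tr(\beta f(\lambda x))=\lambda^2\tr(\beta f(x))$. Because $\lambda^2\ne 0$, the set $\{x\ne 0:\tr(\beta f(x))=0\}$ is a union of cosets of $\F_p^*$ in $\F_{p^2}^*$, and the cosets it comprises are exactly those represented by the elements of $Z_\beta=\{z\in T:\tr(\beta f(z))=0\}$. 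By Lemma~\ref{lem_x*}(1), $|Z_\beta|\in\{0,2\}$, so it suffices to exhibit two distinct cosets on which $\tr(\beta f(\cdot))$ vanishes and to identify them as $\F_p^*$ and $\beta\F_p^*$.

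Next I would produce these two cosets directly from the normalization \eqref{eqn_val}. Using $\beta^{p-1}=-1$, i.e.\ $\beta^p=-\beta$, the first computation is $\tr(\beta f(1))=\tr(\beta)=\beta+\beta^p=0$, so the coset $\F_p^*$ lies in the vanishing set. For the second, $f(\beta)=\beta^2$ gives $\tr(\beta f(\beta))=\tr(\beta^3)$, and since $(\beta^3)^p=(\beta^p)^3=-\beta^3$ we again get $\tr(\beta^3)=\beta^3+(\beta^3)^p=0$; thus the coset $\beta\F_p^*$ also lies in the vanishing set.

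Finally I would combine these facts. As $\beta\notin\F_p$ (otherwise $\beta^{p-1}=1\ne -1$), the cosets $\F_p^*$ and $\beta\F_p^*$ are distinct, so the vanishing set contains at least two cosets; together with $|Z_\beta|\le 2$ this forces it to be exactly $\F_p^*\cup\beta\F_p^*$, which is the assertion. There is really no serious obstacle here: the whole argument rests on the counting bound $|Z_\beta|\in\{0,2\}$ already granted by Lemma~\ref{lem_x*}(1) and on two one-line trace evaluations. The only point requiring a little care is the bookkeeping between elements of $T$ and the cosets they represent, namely noting that homogeneity makes ``$\tr(\beta f(x))=0$'' a coset-invariant condition, so that the two explicit cosets found above genuinely account for both elements of $Z_\beta$.
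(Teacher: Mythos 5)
Your proof is correct, but it takes a different route from the paper's. The paper's proof is a two-line reduction: since $\beta^p=-\beta$, one has $\tr(\beta f(x))=\beta\bigl(f(x)-f(x)^p\bigr)$, so the condition is equivalent to $f(x)\in\F_p^*$; the conclusion then follows from the normalization \eqref{eqn_val} together with Lemma~\ref{lem_1}(2) (every element of $\F_p^*$ is either $f(u)=u^2$ or $f(u\beta)=u^2\beta^2$ for some $u\in\F_p^*$, and $f$ is $2$-to-$1$, so no $x$ outside $\F_p^*\cup\beta\F_p^*$ can have $f(x)\in\F_p^*$). You instead never identify the condition as ``$f(x)\in\F_p$''; you exploit homogeneity to see that the vanishing locus is a union of cosets of $\F_p^*$ counted by $|Z_\beta|$, invoke the Gauss-sum bound $|Z_\beta|\in\{0,2\}$ of Lemma~\ref{lem_x*}(1), and exhibit the two cosets by the direct evaluations $\tr(\beta)=0$ and $\tr(\beta^3)=0$. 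Both arguments use the normalization \eqref{eqn_val} for the containment $\F_p^*\cup\beta\F_p^*\subseteq\{x:\tr(\beta f(x))=0\}$; the difference is in the reverse inclusion, where the paper leans on the $2$-to-$1$ property from Lemma~\ref{lem_1} while you lean on the character-sum count from Lemma~\ref{lem_x*}. Your version has the minor advantage of sidestepping the (true but implicit) fact that $f(x)\ne 0$ for $x\ne 0$, which the paper's phrase ``$f(x)\in\F_p^*$'' quietly relies on; the paper's version is more elementary in that it does not need the exponential-sum machinery behind Lemma~\ref{lem_x*}. Both are complete.
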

\begin{proof}
For $x\ne 0$, $\tr(\beta f(x))=0$ if and only if $f(x)\in\F_p^*$. The claim now follows from  Eqn. \eqref{eqn_val} and Lemma \ref{lem_1}.
\end{proof}

Take $a,b$ to be two nonzero elements in $\F_{p^2}$.  Recall that we use $y_b$ for the unique element of $T$ such that $\tr(by_b)=0$. Also, we define $n_i$ to be the size of $\left\{y\in T\setminus Z_a:\,\frac{(\tr(by))^2}{-4\tr(af(y))}=i\right\}$ for $i\in\F_p^*$, and $n_0$ for the size of $\left\{y\in T\setminus Z_a:\,\tr(by)=0\right\}$.  It is clear that $n_0=0$ if $y_b\in Z_a$ and $n_0=1$ otherwise. We have $\sum_{i\in\F_p}n_i=p+1-|Z_a|$, and $\sum_{z\in Z_a}\delta_{\tr(bz)}=|\{z\in Z_a:\,\tr(bz)=0\}|=1-n_0$.

Similar to the $b=0$ case, we compute that
\begin{align*}
\epsilon \omega^l p&=W(a,b)=\sqrt{p^*}\sum_{y\in T\setminus Z_a}\psi\left(\frac{(\tr(by))^2}{-4\tr(af(y))}\right)
\left(\frac{\tr(af(y))}{p}\right)+p\sum_{y\in Z_a}\delta_{\tr(by)}-p\\
&=\sqrt{p^*}\left[\sum_{i\in\F_p^*}n_i\left(\frac{-i}{p}\right)\omega^i+ n_0\left(\frac{\tr(af(y_b))}{p}\right)\right]-pn_0.
\end{align*}
Dividing  both sides by $(\frac{-1}{p})\sqrt{p^*}$ and using the fact $\sqrt{p^*}=\sum_{i\in\F_p}(\frac{i}{p})\omega^i$, we get
\begin{align*}
\sum_{i\in\F_p^*}n_i\left(\frac{i}{p}\right)\omega^i&+ n_0\left(\frac{-\tr(af(y_b))}{p}\right)
=(n_0+\epsilon\omega^l)\cdot\sum_{i\in\F_p}\left(\frac{i}{p}\right)\omega^i
=\sum_{i\in\F_p}\left[n_0\left(\frac{i}{p}\right)+\epsilon\left(\frac{i-l}{p}\right)\right]\omega^i.
\end{align*}
Since the minimal polynomial of $\omega$ over $\mathbb{Q}$ is $1+x+\cdots+x^{p-1}$, there exists a constant $c$ such that
\begin{align}\label{eqn_10}
&n_i\left(\frac{i}{p}\right)=n_0\left(\frac{i}{p}\right)+\epsilon\left(\frac{i-l}{p}\right)+c,
\quad\forall\, i\ne 0\\\label{eqn_11}
&n_0\left(\frac{-\tr(af(y_b))}{p}\right)=n_0\epsilon\left(\frac{-l}{p}\right)+c.
\end{align}
which gives $n_i =n_0 +\epsilon\left(\frac{i(i-l)}{p}\right)+c\left(\frac{i}{p}\right)$ for all $i\ne 0$. Recall that $\sum_{i\in\F_p}n_i=p+1-|Z_a|$. Taking the summation of $n_i$'s over $i\in\F_p^*$,  we  have
\begin{equation}\label{eqn_1}
p+1-|Z_a|-n_0=\sum_{i\in\F_p^*}n_i=(p-1)n_0+\epsilon(p\delta_l-1),
\end{equation}

\begin{lemma}\label{lem_c}
We have $n_0=1-\delta_l$, $c=0$ and $\epsilon=|Z_a|-1$. In particular,
\begin{enumerate}
\item If $n_0=0$, then $l=0$, $|Z_a|=2$, and $n_i=1$ for each $i\ne 0$.
\item If $n_0=1$, then $l\ne 0$, and $n_i =1 +\epsilon\left(\frac{i(i-l)}{p}\right)\le 2$ for each $i\ne 0$.
\end{enumerate}
\end{lemma}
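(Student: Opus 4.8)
The plan is to derive the entire lemma from the identity $n_i=n_0+\epsilon\left(\frac{i(i-l)}{p}\right)+c\left(\frac{i}{p}\right)$ for $i\neq 0$ (recorded just above the statement), together with \eqref{eqn_1}, \eqref{eqn_11}, and the constraints already in hand: $n_0\in\{0,1\}$, $\epsilon\in\{\pm1\}$, $|Z_a|\in\{0,2\}$ by Lemma~\ref{lem_x*}(1), each $n_i$ a nonnegative integer, and $p\ge 11$. Note first that $c$ is forced to be an integer by \eqref{eqn_10}, since every other quantity there is an integer.

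First I would pin down $n_0$ and $\epsilon$ using only \eqref{eqn_1}, in which $c$ has already cancelled. Rewriting it as $p(1-n_0)+(1-|Z_a|)=\epsilon(p\delta_l-1)$ and comparing the two sides by size: the right-hand side has absolute value $1$ when $l\neq 0$ and $p-1$ when $l=0$, whereas the left-hand side has absolute value at most $1$ when $n_0=1$ and of order $p$ when $n_0=0$. Since $p\ge 11$, the small and large regimes cannot mix, which forces $n_0=1-\delta_l$. Feeding this back in then yields $\epsilon=|Z_a|-1$; in the branch $l=0$ the alternative $\epsilon=-1$ would demand $|Z_a|=2p$, which is absurd. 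In particular $l=0\Leftrightarrow(n_0=0,\,|Z_a|=2)$ and $l\neq 0\Leftrightarrow n_0=1$.

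Next I would show $c=0$. When $n_0=0$ (so $l=0$), \eqref{eqn_11} degenerates to $0=c$, and we are done. When $n_0=1$ (so $l\neq 0$ and $y_b\notin Z_a$, whence $\tr(af(y_b))\neq 0$), \eqref{eqn_11} exhibits $c$ as a difference of two elements of $\{\pm1\}$, so $c\in\{-2,0,2\}$. To discard $c=2$, substitute it into the identity for $n_i$ and restrict to the nonresidues $i$: using $\left(\frac{i(i-l)}{p}\right)=-\left(\frac{i-l}{p}\right)$ there, nonnegativity of $n_i$ forces $\left(\frac{i-l}{p}\right)=-\epsilon$ for every one of the $(p-1)/2$ nonresidues $i$, so that $\sum_{i\ \mathrm{nonres}}\left(\frac{i-l}{p}\right)=-\epsilon(p-1)/2$. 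But evaluating the same sum via the indicator $\tfrac12\bigl(1-\left(\frac{i}{p}\right)\bigr)$ and Lemma~\ref{lem_sum}(2) gives $\sum_{i\ \mathrm{nonres}}\left(\frac{i-l}{p}\right)=\tfrac12\bigl(1-\left(\frac{-l}{p}\right)\bigr)\in\{0,1\}$, which is incompatible with $\pm(p-1)/2$ once $p\ge 11$. The value $c=-2$ is ruled out identically, using residues $i$ in place of nonresidues. Hence $c=0$, and with $\epsilon=|Z_a|-1$ and $n_0=1-\delta_l$ the identity collapses to $n_i=n_0+\epsilon\left(\frac{i(i-l)}{p}\right)$; substituting the two possibilities for $n_0$ gives exactly the stated cases, where $n_i=\left(\frac{i^2}{p}\right)=1$ in case (1), and $n_i\le 2$ in case (2) because the Legendre symbol is at most $1$.

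The main obstacle is the step $c=0$, and specifically the elimination of $c=\pm2$: the integer and sign constraints only force $c$ to be even, and the nonnegativity of a single $n_i$ is not enough on its own. One genuinely needs the cancellation in the character sum $\sum_{i\ \mathrm{nonres}}\left(\frac{i-l}{p}\right)$, that is Lemma~\ref{lem_sum}(2), together with the hypothesis $p\ge 11$ to reach the contradiction. Everything else in the lemma is bounded–integer bookkeeping driven by \eqref{eqn_1}.
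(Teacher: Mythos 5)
Your proof is correct, and while it follows the same overall skeleton (extract $n_0$ from \eqref{eqn_1}, pin down $c$ via \eqref{eqn_11}, then read off the two cases), it diverges from the paper's argument at the two decisive steps. For $n_0=1-\delta_l$ the paper simply reduces \eqref{eqn_1} modulo $2$, using that $p$ is odd and $|Z_a|\in\{0,2\}$; your magnitude comparison of $p(1-n_0)+(1-|Z_a|)$ against $\epsilon(p\delta_l-1)$ reaches the same conclusion and has the bonus of delivering $\epsilon=|Z_a|-1$ immediately from \eqref{eqn_1} alone, whereas the paper only obtains that relation at the very end by re-summing $n_i=1-\delta_l+\epsilon\left(\frac{i(i-l)}{p}\right)$ over $i\in\F_p$ after $c=0$ is in hand. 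The more substantial difference is the treatment of $c$ when $n_0=1$: the paper first proves $|c|\le 1$ by adding the $p-1$ equations \eqref{eqn_10}, which gives $\sum_{i\ne 0}n_i\left(\frac{i}{p}\right)=(p-1)c-\epsilon\left(\frac{-l}{p}\right)$ and hence $(p-1)|c|\le p+2-|Z_a|-n_0<2(p-1)$, after which the parity of $c$ from \eqref{eqn_11} forces $c=0$; you instead accept $c\in\{0,\pm 2\}$ from \eqref{eqn_11} and eliminate $\pm 2$ by combining nonnegativity of the $n_i$ on residues or nonresidues with the evaluation $\sum_{i\ \mathrm{nonres}}\left(\frac{i-l}{p}\right)=\frac{1}{2}\bigl(1-\left(\frac{-l}{p}\right)\bigr)$. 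Both routes are valid (your contradiction already appears for $p\ge 5$, so $p\ge 11$ is ample); the paper's a priori bound on $|c|$ is shorter, while your elimination argument makes visible exactly which positivity and cancellation constraints are doing the work.
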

\begin{proof}
By taking modulo $2$ of \eqref{eqn_1}, we have $ |Z_a|+n_0\equiv \delta_l+1\pmod{2}$, so $n_0\equiv 1-\delta_l\pmod{2}$. Since $0\le n_0\le 1$, it follows that $n_0=1-\delta_l$.

By adding up the  $p-1$ equations of \eqref{eqn_10} and using Lemma \ref{lem_sum}, we get that $\sum_{i\in\F_p^*}n_i\left(\frac{i}{p}\right)=(p-1)c-\epsilon(\frac{-l}{p})$. Therefore, $(p-1)|c|\leq \sum_{i\in\F_p^*}n_i+1=p+2-|Z_a|-n_0<2(p-1)$, and $c$ must be in the set $\{0,1,-1\}$.
If $n_0=0$, then from \eqref{eqn_11} we get $c=0$. If $n_0=1$, then $l\ne 0$, $y_b\in T\setminus Z_a$, and so $\tr(af(y_b))\ne 0$.  Eqn. \eqref{eqn_11} gives that $c$ is even, so $c=0$.

We now have shown that
$n_i =1-\delta_l +\epsilon\left(\frac{i(i-l)}{p}\right)$ for all $i\in \F_p$. Their sum is equal to $p(1-\delta_l)+\epsilon(p\delta_l-1)$ by Lemma \ref{lem_sum}, which should be equal to $p+1-|Z_a|$. This yields that $p(-1+\epsilon)\delta_l=\epsilon+1-|Z_a|$. Since $-2\le \epsilon+1-|Z_a|\le 2$, we must have $\epsilon+1-|Z_a|=0$ and $(-1+\epsilon)\delta_l=0$, i.e., $\epsilon=|Z_a|-1$ and $(\epsilon-1)\beta_l=0$. This completes the proof.
\end{proof}

\section{The ovals}

For each $t\ne 0$, we define the following subset of $PG(2,p)$:
\[
S_t:=\left\{P(x)=\la \tr(tx)^2,\tr(f(x)),\tr(\beta f(x))\ra|\,x\in \F_{p^2}^*\right\}.
\]
We observe that
\begin{equation}\label{eqn_f}
f(x)=\frac{1}{2}\tr(f(x))+\frac{1}{2\beta}\tr(\beta f(x)).
\end{equation}
In particular, no element of $S_t$ is zero. Also, $S_t=\{P(y)|\,y\in T\}$ using the homogeneity of $f$.  The set $S_t$ has size $p+1$: if $y,z\in T$ are distinct and $P(y)=P(z)$, then there exists a constant $\lambda\in\F_p^*$ such that
\[
\tr(ty)^2=\lambda\tr(tz)^2,\quad f(y)=\lambda f(z).
\]
The latter condition implies that $\lambda$ is a nonsquare by Lemma \ref{lem_1}. Then the first holds only if $\tr(ty)=0$ and $\tr(tz)=0$, which implies that $y=z$: a contradiction.

\begin{thm}
For each $t\ne 0$, $S_t$ is an oval.
\end{thm}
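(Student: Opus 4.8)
The plan is to show that $S_t$, which by the discussion preceding the statement is already known to be a set of $p+1$ nonzero points of $PG(2,p)$, contains no three collinear points; together with its cardinality $p+1$ this is exactly what it means for $S_t$ to be an oval of $PG(2,p)$. Equivalently, I will prove that every line of $PG(2,p)$ meets $S_t$ in at most two points.

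So I would fix a line $\ell:\al_0x_0+\al_1x_1+\al_2x_2=0$ with $(\al_0,\al_1,\al_2)\neq 0$ and use the parametrization $S_t=\{P(y)\mid y\in T\}$. Since $\tr$ is $\F_p$-linear and $1,\beta$ form a basis of $\F_{p^2}$ over $\F_p$, the combination $\al_1\tr(f(y))+\al_2\tr(\beta f(y))$ equals $\tr(af(y))$ for $a:=\al_1+\al_2\beta$, and $a=0$ holds exactly when $\al_1=\al_2=0$. Hence the incidence $P(y)\in\ell$ is the single condition $\al_0\tr(ty)^2+\tr(af(y))=0$, and the whole point is to count its solutions $y\in T$ by matching them to the quantities $n_i$ and $Z_a$ studied in Section 2, taking $b=t$ there.

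The argument then splits into three regimes. If $a=0$ (so $\al_1=\al_2=0$ and $\al_0\neq 0$), the line is $x_0=0$ and meets $S_t$ only at the unique $y\in T$ with $\tr(ty)=0$, one point. If $a\neq 0$ but $\al_0=0$, the condition reduces to $\tr(af(y))=0$, i.e. $y\in Z_a$, so Lemma \ref{lem_x*}(1) bounds the count by $|Z_a|\in\{0,2\}$. The main regime is $a\neq 0$, $\al_0\neq 0$: a solution with $y\in Z_a$ forces $\tr(ty)=0$ and so contributes $1-n_0$ points, while a solution with $y\in T\setminus Z_a$ is exactly a $y$ with $\frac{\tr(ty)^2}{-4\tr(af(y))}=i_0$, where $i_0:=(4\al_0)^{-1}\in\F_p^*$, contributing $n_{i_0}$ points. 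Thus $|\ell\cap S_t|=(1-n_0)+n_{i_0}$, and Lemma \ref{lem_c} closes the case: when $n_0=0$ this is $1+1=2$, and when $n_0=1$ it is $0+n_{i_0}\le 2$.

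I expect the only genuine work to be the careful translation of the projective incidence condition into the combinatorial data $(a,i_0,n_0,Z_a)$ of Section 2, in particular tracking the two substitutions $a=\al_1+\al_2\beta$ and $i_0=(4\al_0)^{-1}$ and confirming that the $Z_a$-contribution is precisely $1-n_0$. Once this dictionary is in place the bound $\le 2$ is an immediate consequence of the structure of the $n_i$ recorded in Lemma \ref{lem_c}, with no further estimation needed; the degenerate lines $a=0$ and $\al_0=0$ must be handled separately, but both are routine.
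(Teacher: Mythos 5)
Your proposal is correct and follows essentially the same route as the paper: reduce line--oval incidence to the equation $\al_0\tr(ty)^2+\tr(af(y))=0$, dispose of the two degenerate cases, and in the main case count solutions via $Z_a$ and the $n_i$ of Lemma \ref{lem_c}. The only (cosmetic) difference is normalization: the paper rescales to $a'u^{-1}$ and $b=2t$ so that the relevant index is $i=1$, whereas you keep $b=t$ and absorb the scaling into the index $i_0=(4\al_0)^{-1}$; both yield the same bound $(1-n_0)+n_{i_0}\le 2$.
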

\begin{proof}
A typical projective line has the form $[u,v,w]:=\{\la x,y,z\ra|\,ux+vy+wz=0,\,x,y,z\in\F_p\}$, where $u,v,w\in\F_p$  are not all zeros. The number of its intersection points with the set $S_t$  is equal to the number of solutions to
\begin{align}\label{eqn_oval}
u\tr(ty)^2+\tr((v+w\beta) f(y))=0,\quad y\in T.
\end{align}
We need to show that it has at most two solutions, so that $S_t$ is an oval by definition.

Write $a':=v+w\beta$. The elements $a',u$ are not both zero. If $a'\ne 0$, $u=0$, then Eqn. \eqref{eqn_oval} reduces to $\tr(a'f(y))=0$, which has $|Z_{a'}|\in\{0,2\}$ solutions in $T$. If $a'=0$, $u\ne 0$, the equation reduces to $\tr(ty)=0$, which has a unique solution in $y_t\in T$ by Lemma \ref{lem_1}. So we assume $a'u\ne 0$ from now on.

Now define $a:=a'u^{-1}$, $b:=2t$, so that Eqn. \eqref{eqn_oval} has the form $\tr(by)^2=-4\tr(af(y))$. We have $y_b=y_t$, since $b\F_p^*=t\F_p^*$. We need to consider two cases.
\begin{enumerate}
\item In the  case $\tr(af(y_b))= 0$, i.e., $y_b\in Z_a$,  $y_b$ is clearly a solution of the equation, while the other element of $Z_a$ is not. For those $y\in T\setminus Z_a$, there are $n_1=1$ of them that are solutions to Eqn. \eqref{eqn_oval}  by Lemma \ref{lem_c}. In total, there are exactly two solutions.
\item In the  case $\tr(af(y_b))\ne 0$, i.e., $y_b\not\in Z_a$, no element of $Z_a$ is a solution to Eqn. \eqref{eqn_oval}.  For those $y\in T\setminus Z_a$, there are $n_1=1 +\epsilon\left(\frac{i(i-l)}{p}\right)\le 2$ of them that are solutions to Eqn. \eqref{eqn_oval} by Lemma \ref{lem_c}. In total, there are at most two solutions in this case.
\end{enumerate}
To sum up, we have shown that each projective line intersects $S_t$ in at most two points. This proves that $S_t$ is an oval.
\end{proof}

Now that $S_t$ is an oval, it must be a conic by the well-known result of Segre \cite{HP,segre}. Therefore, it satisfies a quadratic equation
\[
Q(X_0,X_1,X_2):=\sum_{0\leq i\leq j\leq 2}c_{ij}X_iX_j
\]
for some constants $c_{i,j}\in\F_p$ which only depend on $t$. By the previous argument, the line $[1,0,0]$ intersects $S_t$ at the unique point $P(y_t)=\la 0,\tr(f(y_t)),\tr(\beta f(y_t))\ra$,  so is the tangent line at $P(y_t)$. Therefore,
\[
\left[\frac{\partial Q}{\partial X_0},\frac{\partial Q}{\partial X_1},\frac{\partial Q}{\partial X_2}\right]_{P(y_t)}=\left[1,0,0\right].
\]
This yields that
\[
\tr\left(f(y_t)(2c_{11}+c_{12}\beta)\right)=0,\quad \tr\left(f(y_t)(c_{12}+2c_{22}\beta)\right)=0.
\]
Hence,  $2c_{11}+c_{12}\beta$ and $c_{12}+2c_{22}\beta$ must be linearly dependent over $\F_p$, i.e., $c_{12}^2=4c_{11}c_{22}$. Therefore, there exist constants $\lambda,h_0,h_1\in\F_p$ such that
\[
c_{11}X_1^2+c_{12}X_1X_2+c_{22}X_2^2=\lambda(h_1X_1+h_2X_2)^2.
\]
Observe that $\la 1,0,0\ra\not\in S_t$, so $c_{00}\ne 0$ and we set $c_{00}=1$ below. Now $Q$ takes the following form
\[
Q(X_0,X_1,X_2)=X_0^2+X_0(c_{01}X_1+c_{02}X_2)+\lambda(h_1X_1+h_2X_2)^2.
\]
Since $Q$ is nondegenerate, we have $\lambda \ne 0$ and the two vectors $(c_{01},c_{02})$ and $(h_1,h_2)$ are linearly independent over $\F_p$, i.e., $h_1c_{02}-h_2c_{01}\ne 0$.

\begin{lemma}\label{lem_c01} We have $c_{01}=-\tr(t^2)$, and
we may set $\lambda=\beta^{-2}$ and $h_1=\frac{1}{2}\tr(t)\tr(\beta t)$ for all $t\ne 0$ by rescaling $\lambda,\,h_1,\,h_2$ properly.
\end{lemma}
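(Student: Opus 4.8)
The plan is to use the single fact that $Q$ vanishes on the conic, i.e. $Q(P(x))=0$ for every $x\in\F_{p^2}^*$. Writing $A=\tr(f(x))$, $B=\tr(\beta f(x))$ and $C=\tr(tx)^2$, this is the scalar identity
\[
C^2+C(c_{01}A+c_{02}B)+\lambda(h_1A+h_2B)^2=0 .
\]
I would first specialize $x$ to the two cosets $\F_p^*$ and $\beta\F_p^*$, on which $f$ is fully determined by homogeneity and the normalization \eqref{eqn_val}: for $x\in\F_p^*$ one has $f(x)=x^2$, and for $x=\beta s$ with $s\in\F_p^*$ one has $f(x)=s^2\beta^2$. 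By Lemma \ref{lem_zero} the coordinate $B=\tr(\beta f(x))$ vanishes on both cosets, so the $c_{02}$- and $h_2$-terms disappear; after cancelling the common factors $x^4$ and $s^4$ the identity collapses to the two scalar equations
\[
\tr(t)^4+2c_{01}\tr(t)^2+4\lambda h_1^2=0,\qquad \tr(\beta t)^4+2c_{01}\beta^2\tr(\beta t)^2+4\lambda h_1^2\beta^4=0,
\]
which are two equations in the two unknowns $c_{01}$ and $\lambda h_1^2$.

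Next I would eliminate $\lambda h_1^2$. Writing $u=\tr(t)^2$, $v=\tr(\beta t)^2$ and subtracting $\beta^4$ times the first equation from the second gives the factorization
\[
(v-\beta^2u)\,(v+\beta^2u+2c_{01}\beta^2)=0 .
\]
The decisive observation is that the first factor never vanishes for $t\ne 0$. Using the elementary identities $\tr(t)^2=\tr(t^2)+2t^{p+1}$ and $\tr(\beta t)^2=\beta^2(\tr(t^2)-2t^{p+1})$ (both consequences of $\beta^p=-\beta$), one computes $v-\beta^2u=-4\beta^2t^{p+1}$, which is zero only when $t=0$. Hence the second factor must vanish, and since the same identities give $v+\beta^2u=2\beta^2\tr(t^2)$, we obtain $c_{01}=-\tr(t^2)$. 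Substituting this back into the first scalar equation and simplifying again with the two identities yields $\lambda h_1^2=\beta^{-2}\bigl(\tfrac12\tr(t)\tr(\beta t)\bigr)^2$.

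It remains to fix the representative. The rank-one form $\lambda(h_1X_1+h_2X_2)^2$ is invariant under $(\lambda,h_1,h_2)\mapsto(\lambda c^{-2},ch_1,ch_2)$ for $c\in\F_p^*$, and this is the only freedom left after setting $c_{00}=1$. When $\tr(t)\tr(\beta t)\ne 0$ I would choose $c$ so that $ch_1=\tfrac12\tr(t)\tr(\beta t)$; because the product $\lambda h_1^2$ is invariant and already equals $\beta^{-2}(\tfrac12\tr(t)\tr(\beta t))^2$, this single choice simultaneously forces the new $\lambda$ to equal $\beta^{-2}$. Thus both prescribed values are reached at once. For the exceptional directions $t\in\F_p^*\cup\beta\F_p^*$, where $\tr(t)\tr(\beta t)=0$ and hence $h_1=0$, the value $h_1=0$ already matches the formula and nondegeneracy forces $h_2\ne 0$.

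I expect the main obstacle to be exactly this degenerate case. There $c_{11}=\lambda h_1^2=0$, so the square class of $\lambda$ is no longer visible from the cosets $\F_p^*,\beta\F_p^*$ alone, and one must verify separately that $\lambda$ lies in the square class of $\beta^{-2}$. By the relation $S_{rt}=\mathrm{diag}(r^2,1,1)\cdot S_t$ for $r\in\F_p^*$ (which multiplies the coefficient $c_{22}$ by the square $r^4$) this reduces to the two representatives $t=1$ and $t=\beta$, where a third evaluation, e.g. at $x=\gamma$ with $f(\gamma)=\gamma^2$ known, together with nondegeneracy of the conic, pins down the class of $\lambda$. The generic identities $c_{01}=-\tr(t^2)$ and $\lambda h_1^2=\beta^{-2}(\tfrac12\tr(t)\tr(\beta t))^2$, by contrast, drop out of the elimination with no case analysis.
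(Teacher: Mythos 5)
Your computation of $c_{01}$ and of the product $\lambda h_1^2$ is correct and is in substance the same as the paper's: the paper plugs the two points $P(1)=\la\tfrac12\tr(t)^2,1,0\ra$ and $P(\beta)=\la\tfrac12\beta^{-2}\tr(\beta t)^2,1,0\ra$ into $Q$ and reads off $c_{01}$ and $\lambda h_1^2$ from Vieta's formulas for the quadratic $X_0^2+c_{01}X_0+\lambda h_1^2=0$, which is exactly your pair of scalar equations after elimination. Your observation that rescaling $h_1$ to the prescribed value automatically forces $\lambda=\beta^{-2}$ via the invariant $\lambda h_1^2$ is a slightly cleaner packaging of the paper's separate ``$\lambda$ is a nonsquare'' argument, and it is valid whenever $\tr(t)\tr(\beta t)\ne 0$.

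The gap is in the degenerate case $t\in\F_p^*\cup\beta\F_p^*$, and your proposed fix does not close it. There $h_1=0$ and the only remaining unknowns in $Q$ are $c_{02}$ and the product $\lambda h_2^2$. A third evaluation at $x=\gamma$ (the only further point where $f$ is normalized) yields a single linear relation in these two unknowns, so it cannot determine $\lambda h_2^2$; and nondegeneracy of the conic, whose discriminant is $-\tfrac{\lambda}{4}(c_{01}h_2-c_{02}h_1)^2$, only tells you $h_2\ne 0$, not the square class of $\lambda$. Indeed no amount of point-counting on the conic can distinguish the square class of $\lambda$ by itself, since every nondegenerate conic over $\F_p$ has $p+1$ points; the extra input that is genuinely needed is that $\la 1,0,0\ra$ is an \emph{internal} point (Lemma \ref{lem_int}). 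This is how the paper resolves it: it explicitly defers this sub-case until Eqn.~\eqref{eqn_sq}, where the internal-point property yields $\lambda(h_1f_1+h_2f_\beta)^2=\beta^2\tr(tx)^2\tr(tx^\ast)^2$, so that choosing $x$ with $\tr(tx)\tr(tx^\ast)\ne 0$ places $\lambda$ in the nonsquare class $\beta^2(\F_p^*)^2$ for every $t\ne 0$. You should either import that argument (Lemma \ref{lem_int} does not depend on the present lemma, so there is no circularity) or replace your ``third evaluation plus nondegeneracy'' step with it.
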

\begin{proof}
We look at the two points $P(1)=\la\frac{1}{2}\tr(t)^2,1,0\ra$ and $P(\beta)=\la \frac{1}{2}(\beta^{-1}\tr(\beta t))^2,1,0\ra$. Plugging them into the expression of $Q$, we see that the equation $X_0^2+c_{01}X_0+\lambda h_1^2=0$ has the two solutions  $\frac{1}{2}(t+t^p)^2$ and $\frac{1}{2}(\beta^{-1}\tr(\beta t))^2=\frac{1}{2}(t-t^p)^2$. It follows that
\[
c_{01}=-\frac{1}{2}(t+t^p)^2-\frac{1}{2}(t-t^p)^2=-t^2-t^{2p},\quad
\lambda h_1^2=\frac{1}{4}\left(t^2-t^{2p}\right)^2.
\]
If $t^2\in\F_p^*$, then it follows that $h_1=0=\frac{1}{2}\tr(t)\tr(\beta t)$. We leave the proof about the claim that $\lambda$ is a nonsquare in this case until we have derived Eqn. \eqref{eqn_sq} below. Assume that $t^2\not\in\F_p^*$. If $\lambda=w^2$ with $w\in\F_p^*$, then  $\pm 2wh_1=t^2-t^{2p}$ which gives a contradiction upon taking trace. Hence $\lambda$ is a nonsquare of $\F_p^*$.  We thus set $\lambda=\beta^{-2}$ below by properly rescaling $h_1$ and $h_2$. It follows that
\begin{equation*}\label{eqn_h1}
h_1=\frac{\beta}{2}(t^2-t^{2p})=\frac{1}{2}\tr(t)\tr(\beta t)
\end{equation*}
upon multiplying both $h_1$ and $h_2$ with $\pm 1$ simultaneously.
\end{proof}

\begin{lemma}\label{lem_int}$\la 1,0,0\ra$ is an internal point, i.e., each line through it has either $0$ or $2$ points of $S_t$. The line through $\la 1,0,0\ra$ and $P(x)$ also passes through the point $P(x^*)$, with $x^*$ as defined in Lemma \ref{lem_x*}.
\end{lemma}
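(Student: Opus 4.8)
The plan is to read off both assertions directly from Lemma~\ref{lem_x*}. First I note that the lines of $PG(2,p)$ through $\la 1,0,0\ra$ are precisely those of the form $[0,v,w]$ with $(v,w)\ne(0,0)$, since a line $[u,v,w]$ contains $\la 1,0,0\ra$ exactly when $u=0$. Writing $S_t=\{P(y)\mid y\in T\}$, a point $P(y)$ lies on $[0,v,w]$ iff $v\tr(f(y))+w\tr(\beta f(y))=\tr((v+w\beta)f(y))=0$. Setting $a:=v+w\beta$, which is nonzero because $\{1,\beta\}$ is an $\F_p$-basis of $\F_{p^2}$, the points of $S_t$ on this line are exactly the $P(y)$ with $y\in Z_a$. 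By Lemma~\ref{lem_x*}(1) we have $|Z_a|\in\{0,2\}$, so every line through $\la 1,0,0\ra$ meets $S_t$ in $0$ or $2$ points, which is precisely the assertion that $\la 1,0,0\ra$ is an internal point.

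For the second statement I would fix $x\ne 0$ and show that $P(x^\ast)$ lies on the line joining $\la 1,0,0\ra$ to $P(x)$. Since $\la 1,0,0\ra\notin S_t$, we have $P(x)\ne\la 1,0,0\ra$, so this line is well defined, and every point of it other than $\la 1,0,0\ra$ has its last two coordinates in the fixed ratio $\tr(f(x)):\tr(\beta f(x))$. Now Lemma~\ref{lem_x*}(2) gives $f(x)=\beta^2 f(x^\ast)$, hence $f(x^\ast)=\beta^{-2}f(x)$ with $\beta^{-2}\in\F_p^\ast$. Using $\beta^{-1}=\beta^{-2}\beta$ this yields $\tr(f(x^\ast))=\beta^{-2}\tr(f(x))$ and $\tr(\beta f(x^\ast))=\tr(\beta^{-1}f(x))=\beta^{-2}\tr(\beta f(x))$, so the last two coordinates of $P(x^\ast)$ are the common scalar multiple $\beta^{-2}$ of those of $P(x)$. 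They therefore have the same ratio, and $P(x^\ast)$ lies on the line through $\la 1,0,0\ra$ and $P(x)$.

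Finally I would check that $P(x)$ and $P(x^\ast)$ really are distinct points of $S_t$: since $f(x^\ast)/f(x)=\beta^{-2}$ is a nonsquare of $\F_p^\ast$, Lemma~\ref{lem_1}(2) forces $x$ and $x^\ast$ into distinct cosets of $\F_p^\ast$, and distinct cosets yield distinct points of $S_t$ by the size-$(p+1)$ argument. Hence the two points in which a secant through $\la 1,0,0\ra$ meets $S_t$ are exactly $P(x)$ and $P(x^\ast)$, matching the first paragraph. I do not anticipate a genuine obstacle here; the only things needing care are the projective bookkeeping (that $a=v+w\beta\ne 0$, that the relevant line is well defined, and that $P(x)\ne P(x^\ast)$), all immediate from the earlier lemmas. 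The real content is merely that homogeneity, through $f(x)=\beta^2 f(x^\ast)$, forces the last two coordinates of $P(x^\ast)$ to be an $\F_p$-multiple of those of $P(x)$.
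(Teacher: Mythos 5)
Your proof is correct and follows essentially the same route as the paper's: both claims are read off from Lemma~\ref{lem_x*}, with part (1) giving $|Z_a|\in\{0,2\}$ for the internal-point assertion and part (2) giving $f(x)=\beta^2 f(x^\ast)$, so that $P(x)$ and $P(x^\ast)$ share their last two projective coordinates up to the scalar $\beta^{-2}$ and hence lie on a common line through $\la 1,0,0\ra$. You merely make explicit some details (injectivity of $y\mapsto P(y)$ and $P(x)\ne P(x^\ast)$) that the paper leaves implicit.
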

\begin{proof}Let $[ 0,u,v]$ be a line in $PG(2,p)$ through $\la 1,0,0\ra$. A point $P(x)\in S_t$ lies on it if  $\tr((u+v\beta)f(x))=0$. For each $x\in\F_{q^2}^*$, there exists $x^\ast\in\F_{p^2}^*$ such that $f(x)=\beta^2 f(x^\ast)$ by Lemma \ref{lem_x*}. Therefore, $P(x)=\la \tr(tx)^2,\tr(f(x)),\tr(\beta f(x))\ra$, and
\begin{equation*}\label{eqn_Px*}
P(x^\ast)=\la\beta^{2}\tr(tx^\ast)^2,\tr(f(x)),\tr(\beta f(x))\ra.
\end{equation*}
It is clear that $[ 0,u,v]$ passes through $P(x)$ if and only if it passes through the other point $P(x^\ast)$.
\end{proof}

Let us work out Lemma \ref{lem_int} in the coordinate form in details. Take $X=\la X_0,X_1,X_2\ra$ to be a point of $S_t$. Let $X^\ast=\la X_0+\lambda,X_1,X_2 \ra$ be the other point of $S_t$ lying on the line through $\la 1,0,0\ra$ and $X$. Then it is straightforward to compute that
\[
Q(X^\ast)=\lambda(\lambda+2X_0+c_{01}X_1+c_{02}X_2)=0,
\]
which yields that $\lambda=-2X_0-c_{01}X_1-c_{02}X_2$, and so $X^\ast=\la-X_0-c_{01}X_1-c_{02}X_2,X_1,X_2\ra$. We have thus shown that the central collineation determined by the following matrix
\[
M=\begin{pmatrix}-1&0&0\\-c_{01}&1&0\\-c_{02}&0&1\end{pmatrix}
\]
interchanges the points of $S_t$ lying on the same secant through $\la 1,0,0\ra$.\\

\section{Proof of the main result}
We are now ready to present the proof of our main result, which follows from a series of lemmas. By Lemma \ref{lem_int}, $P(x)$ and $P(x^\ast)$ lies on the same secant through $\la 1,0,0\ra$.   For the ease of notations, we shall write
\[
f_1:=\tr(f(x)),\quad f_\beta:=\tr(\beta f(x)).
\]

\begin{lemma}There exist constants $A,B\in\F_p$ such that $c_{02}=\tr(At^2)+Bt^{p+1}$, and
\begin{align}\label{eqn_AB}
  Af_\beta+\beta^2(x^\ast)^2+x^2-f_1=0,\quad Bf_\beta+2\beta^2(x^\ast)^{1+p}+2x^{1+p}=0
\end{align}
hold for all $x\in\F_{p^2}^*$.
\end{lemma}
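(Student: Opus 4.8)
The plan is to read off the two identities directly from the central collineation $M$ constructed above, which interchanges the two points $P(x)$ and $P(x^*)$ of the conic $S_t$ on each secant through $\langle 1,0,0\rangle$. Writing $P(x)=\langle \tr(tx)^2, f_1, f_\beta\rangle$ and using $f(x^*)=\beta^{-2}f(x)$ to rescale $P(x^*)=\langle \beta^2\tr(tx^*)^2, f_1, f_\beta\rangle$, I would apply $M$ to $P(x)$ and equate the result with $P(x^*)$. For every $x\neq 0$ one has $(f_1,f_\beta)\neq(0,0)$, since $f_1=f_\beta=0$ forces $f(x)=0$ by \eqref{eqn_f}; hence the last two coordinates already match and the first coordinates must agree, giving the master identity
\[
\tr(tx)^2+\beta^2\tr(tx^*)^2+c_{01}f_1+c_{02}f_\beta=0
\]
for all $x\neq 0$ and all $t\neq 0$.

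Substituting $c_{01}=-\tr(t^2)=-(t^2+t^{2p})$ from Lemma \ref{lem_c01} and expanding the squared traces (so that $\tr(tx)^2=x^2t^2+2x^{p+1}t^{p+1}+x^{2p}t^{2p}$, and similarly for $x^*$), I would regroup everything as a polynomial in $t$ supported on the three monomials $t^2,t^{2p},t^{p+1}$:
\[
(x^2+\beta^2(x^*)^2-f_1)\,t^2+(x^{2p}+\beta^2(x^*)^{2p}-f_1)\,t^{2p}+(2x^{p+1}+2\beta^2(x^*)^{p+1})\,t^{p+1}+c_{02}f_\beta=0.
\]
The decisive observation is that $c_{02}$ is a function of $t$ alone. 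Choosing one $x_0$ with $f_\beta\neq 0$, which exists because $\F_p^*\cup\beta\F_p^*$ is a proper subset of $\F_{p^2}^*$ by Lemma \ref{lem_zero}, I would solve this relation for $c_{02}$ and exhibit it as an $\F_{p^2}$-combination of $t^2,t^{2p},t^{p+1}$. Because $f_1,f_\beta,\beta^2\in\F_p$, the $t^{2p}$-coefficient of this expansion is the $p$-th power of the $t^2$-coefficient, while the $t^{p+1}$-coefficient is assembled from norms and so lies in $\F_p$; this is exactly $c_{02}=\tr(At^2)+Bt^{p+1}$ with $A:=(f_1-x_0^2-\beta^2(x_0^*)^2)/f_\beta$ and $B:=-2(x_0^{p+1}+\beta^2(x_0^*)^{p+1})/f_\beta\in\F_p$.

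With this closed form for $c_{02}$ fixed, I would feed it back into the regrouped identity for an \emph{arbitrary} $x\neq 0$, now including the cases $f_\beta=0$, obtaining for all $t\neq 0$
\[
(x^2+\beta^2(x^*)^2-f_1+Af_\beta)\,t^2+(2x^{p+1}+2\beta^2(x^*)^{p+1}+Bf_\beta)\,t^{p+1}+(\text{the }p\text{-th power of the first coefficient})\,t^{2p}=0.
\]
The three functions $t\mapsto t^2,t^{2p},t^{p+1}$ are linearly independent over $\F_{p^2}$, their exponents being distinct elements of $\{0,1,\dots,p^2-2\}$ for $p\geq 3$; hence every coefficient vanishes. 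The vanishing of the $t^2$- and $t^{p+1}$-coefficients is precisely \eqref{eqn_AB}, and the $t^{2p}$-coefficient only repeats the first via Frobenius. The main thing to watch is the bookkeeping: justifying the coordinate-equality step through the remark $(f_1,f_\beta)\neq(0,0)$, noting that the $\pm$ ambiguity in $x^*$ is harmless since only $(x^*)^2$ and $(x^*)^{p+1}$ occur, and making sure the single pair $(A,B)$ extracted from $x_0$ indeed forces \eqref{eqn_AB} uniformly in $x$.
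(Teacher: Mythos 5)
Your proof is correct and follows essentially the same route as the paper: derive $\beta^2\tr(tx^\ast)^2=-\tr(tx)^2-c_{01}f_1-c_{02}f_\beta$ from the central collineation, read off the shape of $c_{02}$ by specializing to an $x_0$ with $f_\beta\ne 0$, and then annihilate all coefficients of the resulting low-degree polynomial identity in $t$ (your linear-independence-of-monomials argument is the same as the paper's ``degree $\le 2p<p^2-1$ with $p^2-1$ roots'' argument). The only caveat is that your $A=(f_1-x_0^2-\beta^2(x_0^\ast)^2)/f_\beta$ lies in $\F_{p^2}$ rather than $\F_p$, but this matches the paper's actual usage: the lemma's ``$A,B\in\F_p$'' is a typo, since the paper itself later notes that $A$ is not in $\F_p$.
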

\begin{proof}
From the connection between $P(x)$ and $P(x^\ast)$ we have derived after Lemma \ref{lem_int}, it follows that
\begin{equation}\label{eqn_lhs}
\beta^2\tr(tx^\ast)^2=-\tr(tx)^2-c_{01}f_1-c_{02}f_\beta.
\end{equation}
for all $x\in\F_{p^2}^*$. Recall that $c_{01}=-\tr(t^2)$ by Lemma \ref{lem_c01}. Expanding this equation, we get
\begin{align}\label{eqn_c02}
-c_{02}&f_\beta=\beta^2\tr(tx^\ast)^2+\tr(tx)^2+c_{01}f_1\notag\\
&=\tr\left((\beta^2(x^\ast)^2+x^2-f_1)t^2\right)
+2\left[\beta^2(x^\ast)^{1+p}+x^{1+p}\right]t^{p+1}.
\end{align}
By specializing to any  $x\ne0$ such that $f_\beta\ne 0$, we see that $c_{02}$, as a polynomial in the variable $t$, involves only the monomials $t^2$, $t^{2p}$ and $t^{p+1}$. In other words,  there are constants $A,B$ independent of $t$ such that
$c_{02}=\tr(At^2)+Bt^{p+1}$.

After plugging the expression of $c_{02}$ into Eqn. \eqref{eqn_c02}, we see that it has degree at most $2p$ in $t$. However, it holds for all $t\in\F_{p^2}^*$. Since $p^2-1>2p$ , it follows that this is a zero polynomial in $t$. Comparing the coefficients of $t^2$, $t^{2p}$ and $t^{p+1}$, we get the second half of the claim.
\end{proof}

\begin{remark}All elements of $S_t$ satisfy the equation $Q(X_0,X_1,X_2)=0$, so
\begin{equation}\label{eqn_ovalp}
 \tr(tx)^2\left(\tr(tx)^2+c_{01}f_1+c_{02}f_2\right)=-\lambda\left(h_1f_1+h_2f_\beta\right)^2
\end{equation}
holds for all $x\in\F_{p^2}^*$. By using Eqn. \eqref{eqn_lhs}, this reduces to
\begin{equation}\label{eqn_sq}
\beta^2\tr(tx)^2\tr(tx^\ast)^2=\lambda(h_1f_1+h_2f_\beta)^2.
\end{equation}
By choosing $x$ such that $\tr(tx)\tr(tx^\ast)\ne 0$, we see that $\lambda$ is a nonsquare in $\F_p^*$ for all $t\ne 0$. This completes the proof of Lemma \ref{lem_c01}.
\end{remark}

\begin{lemma} There exist constants $C,D\in\F_{p^2}$, $E\in\F_p$ and a sign function $\eta(t)\in\{\pm 1\}$ such that
\[
h_2=\tr\left((\eta(t)C-D)t^2\right) +\eta(t)Et^{p+1}.
\]
holds for all $t\in\F_{p^2}^*$.
\end{lemma}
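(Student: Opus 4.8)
The plan is to read off the shape of $h_2$ directly from the square relation \eqref{eqn_sq}, which after substituting $\lambda=\beta^{-2}$ reads $(h_1f_1+h_2f_\beta)^2=\beta^4\tr(tx)^2\tr(tx^\ast)^2$. Taking a square root, for each pair $(t,x)$ there is a sign $\eta\in\{\pm1\}$ with
\[
h_1f_1+h_2f_\beta=\eta\,\beta^2\tr(tx)\tr(tx^\ast).
\]
The key realization is that I need not control how this sign varies with $x$: I will freeze one convenient $x_0$, use it to solve for $h_2(t)$, and simply \emph{define} $\eta(t)$ to be the sign occurring at $x=x_0$. This is the one conceptual step worth flagging, and I regard it as the main (though mild) obstacle, since it sidesteps the otherwise delicate question of whether the square-root sign in \eqref{eqn_sq} is independent of $x$.

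First I would choose $x_0\in\F_{p^2}^*$ with $f_\beta(x_0)\ne 0$; such an $x_0$ exists because $f_\beta=\tr(\beta f(x))=0$ forces $x\in\F_p^*\cup\beta\F_p^*$ by Lemma \ref{lem_zero}, a set of only $2p-2<p^2-1$ elements. For this fixed $x_0$ the displayed identity lets me solve
\[
h_2(t)=\frac{\eta(t)\,\beta^2\tr(tx_0)\tr(tx_0^\ast)-h_1(t)\,f_1(x_0)}{f_\beta(x_0)},
\]
where I set $\eta(t):=\eta(t,x_0)$, assigned arbitrarily on the finitely many $t$ for which $\tr(tx_0)\tr(tx_0^\ast)=0$ since the formula is insensitive to $\eta$ there.

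Next I would expand the two $t$-dependent ingredients. Writing out the traces gives
\[
\tr(tx_0)\tr(tx_0^\ast)=(x_0x_0^\ast)\,t^2+\tr\!\big(x_0(x_0^\ast)^p\big)\,t^{p+1}+(x_0x_0^\ast)^p\,t^{2p},
\]
where I used $x_0(x_0^\ast)^p+x_0^px_0^\ast=\tr(x_0(x_0^\ast)^p)\in\F_p$, and by Lemma \ref{lem_c01} we have $h_1(t)=\tfrac{\beta}{2}(t^2-t^{2p})$. Substituting these and collecting the monomials $t^2$, $t^{p+1}$, $t^{2p}$ points to the constants
\[
C:=\frac{\beta^2 x_0x_0^\ast}{f_\beta(x_0)},\qquad D:=\frac{\beta f_1(x_0)}{2f_\beta(x_0)},\qquad E:=\frac{\beta^2\tr\!\big(x_0(x_0^\ast)^p\big)}{f_\beta(x_0)},
\]
with $C,D\in\F_{p^2}$ and $E\in\F_p$. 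With these choices the coefficient of $t^2$ becomes $\eta(t)C-D$ and that of $t^{p+1}$ becomes $\eta(t)E$.

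The only remaining check is that the $t^{2p}$ coefficient is consistent with the compact form $\tr((\eta(t)C-D)t^2)=(\eta(t)C-D)t^2+(\eta(t)C-D)^pt^{2p}$. Using $\beta^2\in\F_p$, $f_1(x_0),f_\beta(x_0)\in\F_p$ and $\beta^p=-\beta$, I would verify that $(\eta(t)C-D)^p=\eta(t)\frac{\beta^2(x_0x_0^\ast)^p}{f_\beta(x_0)}+\frac{\beta f_1(x_0)}{2f_\beta(x_0)}$, which matches the computed coefficient of $t^{2p}$ exactly. Hence $h_2(t)=\tr\!\big((\eta(t)C-D)t^2\big)+\eta(t)Et^{p+1}$ for all $t\ne 0$, as claimed, and this final matching is purely mechanical once the constants $C,D,E$ have been identified.
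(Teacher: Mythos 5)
Your proposal is correct and follows essentially the same route as the paper: take the square root of Eqn.~\eqref{eqn_sq}, substitute $h_1=\frac{\beta}{2}(t^2-t^{2p})$, specialize to a fixed $x_0$ with $f_\beta(x_0)\ne 0$, and read off the coefficients of $t^2$, $t^{p+1}$, $t^{2p}$. You merely make explicit what the paper leaves implicit (the formulas for $C,D,E$ and the Frobenius-consistency check on the $t^{2p}$ coefficient), and both steps check out.
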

\begin{proof} Recall that $\lambda=\beta^{-2}$ by Lemma \ref{lem_c01}.
Eqn. \eqref{eqn_sq} yields that
\begin{align*}
h_1f_1+h_2f_\beta=\epsilon \beta^2\tr(tx)\tr(tx^\ast)
\end{align*}
for some $\epsilon=\pm 1$. The sign $\epsilon$ here may depend on $t$ and $x$. It comes very natural since we have the freedom of $\pm$ in choosing $x^*$.  Using the expression of $h_1$ in Lemma \ref{lem_c01}, we get
\begin{align}
h_2f_\beta&=\epsilon \beta^2\tr(tx)\tr(tx^\ast)-\frac{1}{2}\tr(t)\tr(\beta t)f_1\notag\\\label{eqn_RHS}
&=\tr\left((\epsilon \beta^2xx^\ast-\frac{\beta}{2}f_1)t^2\right)
 +\epsilon \beta^2\left(x(x^\ast)^p+x^px^\ast\right)t^{p+1}.
\end{align}
If we take a specific $x\ne 0$ such that $f_\beta\ne 0$, then we see that there are constants $C,D\in\F_{p^2}$, $E\in\F_p$, and a sign function $\eta(t)\in\{\pm 1\}$ such that $h_2$ takes the form as stated in the Lemma.
\end{proof}
Plugging this expression of $h_2$ into Eqn. \eqref{eqn_RHS}, we compute that
\begin{align}
 \tr\left((\epsilon \beta^2xx^\ast-\frac{\beta}{2}f_1 -\eta(t)Cf_\beta+Df_\beta)t^2\right)
 +\left(\epsilon \beta^2\left(x(x^\ast)^p+x^px^\ast\right)  - \eta(t)f_\beta E\right)t^{p+1}=0.\label{eqn_h2}
\end{align}

\noindent {\it For the remaining arguments, recall that we assume $p>7$ by Remark \ref{rem_val}.}
\begin{lemma}\label{lem_etat} We may set $\eta(t)$ to be a constant $\nu\in\{1,-1\}$ for all $t\ne 0$.
\end{lemma}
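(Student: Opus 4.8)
The crucial structural fact is that the conic parameters $h_1(t)$ and $h_2(t)$ depend only on $t$, whereas the (unnumbered) identity $h_1f_1+h_2f_\beta=\epsilon\beta^2\tr(tx)\tr(tx^\ast)$ obtained from \eqref{eqn_sq} holds for \emph{every} $x$, with a sign $\epsilon=\epsilon(t,x)$ and a fixed choice of $x^\ast$. I would fix once and for all the reference point $x_0$ defining $\eta(t)=\epsilon(t,x_0)$, together with a second point $x'$, both with $f_\beta\ne0$, and set $d_x(t):=\beta^2\tr(tx)\tr(tx^\ast)$, which for a fixed choice of $x^\ast$ is an honest polynomial in $t$ supported on $t^2,t^{p+1},t^{2p}$. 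Recall also $h_1(t)=\tfrac12\tr(t)\tr(\beta t)=\tfrac{\beta}{2}(t^2-t^{2p})$ from Lemma \ref{lem_c01}; here $f_1,f_\beta$ abbreviate $\tr(f(x)),\tr(\beta f(x))$, and I subscript the chosen point.

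Next I would eliminate the unknown $h_2(t)$. Solving the identity at $x_0$ for $h_2$ and inserting it into the identity at $x'$ gives, with the constants $\mu:=f_\beta(x')/f_\beta(x_0)\ne0$ and $K:=f_1(x')-\mu f_1(x_0)$,
\[
\epsilon(t,x')\,d_{x'}(t)=K\,h_1(t)+\eta(t)\,\mu\,d_{x_0}(t).
\]
Squaring and using $\epsilon(t,x')^2=\eta(t)^2=1$ removes the sign $\epsilon(t,x')$ and isolates $\eta$:
\[
\eta(t)\,p_1(t)=p_2(t),\qquad p_1:=2K\mu\,h_1d_{x_0},\quad p_2:=d_{x'}^2-K^2h_1^2-\mu^2d_{x_0}^2,
\]
so that $p_1(t)^2=p_2(t)^2$ for all $t\ne0$.

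The conclusion is then a degree count in $\F_{p^2}[t]$. Each of $h_1,d_{x_0},d_{x'}$ has degree at most $2p$, so $p_1^2-p_2^2$ has degree at most $8p$; since it vanishes on all of $\F_{p^2}^\ast$ and $8p<p^2-1$ precisely when $p\ge11$ — our standing assumption — it is the zero polynomial. As $\F_{p^2}[t]$ is an integral domain this forces $p_2=\nu p_1$ for a single global sign $\nu\in\{1,-1\}$, whence $\eta(t)=p_2(t)/p_1(t)=\nu$ at every $t$ with $p_1(t)\ne0$. At the zeros of $p_1$ coming from $d_{x_0}(t)=0$ the identity at $x_0$ reads $h_1f_1(x_0)+h_2f_\beta(x_0)=\eta(t)\cdot0$, so the sign is vacuous and we simply put $\eta(t):=\nu$ there.

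Two points require care, the second being the main obstacle. First, I must guarantee $p_1\not\equiv0$, i.e.\ $K\ne0$ for some admissible pair; since $K=f_\beta(x')\big(f_1(x')/f_\beta(x')-f_1(x_0)/f_\beta(x_0)\big)$, the only way $K$ could vanish for all pairs is that $\tr(f(x))/\tr(\beta f(x))$ is constant on $\{x:f_\beta(x)\ne0\}$, i.e.\ $\tr((1-c\beta)f(x))=0$ there for some $c\in\F_p$. By Lemma \ref{lem_zero} this set has $(p-1)^2$ elements, and as $f$ is $2$-to-$1$ (Lemma \ref{lem_1}) its image would then lie in a single $\F_p$-line of $\F_{p^2}$, which has only $p$ points — impossible for $p\ge5$, so a good pair exists. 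Second, and more seriously, the squaring only controls $\eta$ off the zero locus of $h_1$, namely $t\in\F_p^\ast\cup\beta\F_p^\ast$ (where $h_1(t)=\tfrac\beta2(t^2-t^{2p})$ vanishes); on these two lines the sign-definite anchor $h_1$ disappears and $\eta(t)$ is not pinned by the above. I expect the genuine work to be in disposing of these $2(p-1)$ values — for instance by relating $S_t$ with $t\in\F_p^\ast$ to $S_1$ through the diagonal collineation $\la X_0,X_1,X_2\ra\mapsto\la t^2X_0,X_1,X_2\ra$ and checking the induced sign, or by direct evaluation — after which $\eta\equiv\nu$ may be taken for all $t\ne0$.
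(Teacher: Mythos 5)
Your route is genuinely different from the paper's. The paper fixes $x$, substitutes the formula for $h_2$ into the full conic equation \eqref{eqn_ovalp}, writes the result as $H(t)=F(t)+\eta(t)G(t)$ with $F,G,H$ of degree at most $4p$ in $t$, and uses the pigeonhole observation that some sign $\nu$ occurs for at least $\frac{p^2-1}{2}>4p$ values of $t$ to conclude that $H-F-\nu G$ vanishes identically; you instead eliminate $h_2$ between two reference points and square to isolate $\eta$. Both arguments rest on the same degree count ($8p<p^2-1$ for $p\ge 11$), your verification that a pair with $K\ne 0$ exists is correct, and your treatment of the zeros of $d_{x_0}$ is fine. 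The problem is that you stop short of the conclusion: you explicitly leave open the $2(p-1)$ values $t\in\F_p^*\cup\beta\F_p^*$ where $h_1(t)=0$, so as written the statement ``$\eta(t)=\nu$ for all $t\ne0$'' is not established on those two lines, and the collineation machinery you gesture at is not an argument.

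The missing step is, however, much shorter than you anticipate. Note that $\tr(Dt^2)=\frac{f_1(x_0)}{2f_\beta(x_0)}\tr(\beta t^2)=\frac{f_1(x_0)}{f_\beta(x_0)}\cdot\frac{\beta}{2}(t^2-t^{2p})$ is a scalar multiple of $h_1(t)$, hence also vanishes at these $t$; so the defining identity at $x_0$ collapses to $h_2(t)f_\beta(x_0)=\eta(t)\,d_{x_0}(t)$, and the only effect of $\eta(t)$ there is the overall sign of $h_2(t)$. But when $h_1(t)=0$ the factorization $c_{11}X_1^2+c_{12}X_1X_2+c_{22}X_2^2=\lambda(h_1X_1+h_2X_2)^2=\lambda h_2(t)^2X_2^2$ determines $h_2(t)$ only up to sign (the normalization $h_1=\frac12\tr(t)\tr(\beta t)$ of Lemma \ref{lem_c01} is vacuous when both sides are zero), so one is free to replace $h_2(t)$ by $-h_2(t)$ at exactly these $t$, which flips $\eta(t)$; choosing the sign that makes $\eta(t)=\nu$ finishes the proof. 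This is also, implicitly, how the paper sidesteps the issue: in \eqref{eqn_ovalp} the coefficient of $\eta(t)$ is $-2\beta^{-2}P(t)R(t)$ with $P(t)=h_1(t)\bigl(f_1-\frac{f_1(x_0)}{f_\beta(x_0)}f_\beta\bigr)$, which vanishes whenever $h_1(t)$ does, so the substitution $\eta(t)\mapsto\nu$ costs nothing there and any residual sign is absorbed into $\epsilon(t,x)$ in \eqref{eqn_h2}. With this one observation added your argument is complete; without it there is a genuine hole.
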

\begin{proof}
We observe that $c_{01}$, $c_{02}$ and $h_1$ involve the monomials $t^2,t^{2p},t^{p+1}$ as polynomials in $t$, and $h_2$ involves the same monomials plus the sign function $\eta(t)$.
The left hand side of Eqn. \eqref{eqn_ovalp} is equal to
\begin{align*}
H(t):=&\tr(tx)^2\left[\tr\left((-f_1+Af_\beta+x^2)t^2\right)+(2x^{p+1}+Bf_\beta) t^{p+1}\right]
\end{align*}
The right hand side, $-\beta^{-2}(h_1f_1+h_2f_\beta)^2$, has the form $F(t)+\eta(t)G(t)$ for some polynomials $F(t)$ and $G(t)$ by direct expansion. The three polynomials $F,G,H$ each has degree at most $4p$. Now Eqn. \eqref{eqn_ovalp} has the form $H(t)=F(t)+G(t)\eta(t)$, which holds for all $x\ne 0$ and $t\ne 0$.  For some $\nu\in\{1,-1\}$, there are at least $\frac{p^2-1}{2}$ of $t$'s such that $\eta(t)=\nu$. When $p>7$, we have $\frac{p^2-1}{2}>4p$, and  $H(t)=F(t)+\nu G(t)$ has to be a zero polynomial in $t$. In particular, this means that Eqn. \eqref{eqn_ovalp}  holds for all $x\ne 0$ and $t\ne 0$ when we take $\eta(t)$ to be a constant $\nu$. This completes the proof.
\end{proof}

With $\eta(t)$ as a constant $\nu\in\{1,-1\}$ and the same argument as in the proof of Lemma \ref{lem_etat},  we see from Eqn. \eqref{eqn_h2} that $\epsilon$ is independent of $t$ and depends on $x$ only. Moreover, as a polynomial in $t$, Eqn. \eqref{eqn_h2} has degree at most $4p<p^2-1$ and is satisfied by all $t\ne 0$, so is a zero polynomial. By comparing coefficients, we obtain the following relations:
\begin{align*}
\epsilon \beta^2xx^\ast=\frac{\beta}{2}f_1+\nu Cf_\beta-Df_\beta ,\quad
\epsilon \beta^2\left(x(x^\ast)^p+x^px^\ast\right)  = \nu Ef_\beta .
\end{align*}
Together with Eqn. \eqref{eqn_AB} which we repeat here:
\[
Af_\beta+\beta^2(x^\ast)^2+x^2-f_1=0,\quad Bf_\beta +2\beta^2(x^\ast)^{1+p}+2x^{1+p}=0,
\]
we get $f_1=Af_\beta+\beta^2(x^\ast)^2+x^2$, and plug it into the others to get
\begin{align}\label{eqn_fb}
(x-\epsilon \beta x^\ast)^2&=-2\beta^{-1}(\beta A/2+\nu C-D)f_\beta
\end{align}
and
\begin{align}\label{eqn_last}
-x(\epsilon\beta x^\ast)^p+x^p(\epsilon\beta x^\ast)= \nu \beta^{-1}E f_\beta ,\quad
x^{1+p}-(\epsilon \beta x^*)^{p+1}=-\frac{B}{2}f_\beta.
\end{align}
\begin{lemma}\label{lem_C1} We have
$C_1:=-2\beta^{-1}(\beta A/2+\nu C+D)=0$.
\end{lemma}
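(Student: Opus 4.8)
My plan is to combine the quadratic identity \eqref{eqn_fb} with a ``norm'' identity extracted from \eqref{eqn_last}, force a proportionality that the planarity of $f$ forbids, and finally convert the resulting vanishing into the statement about $C_1$. Write $u:=x-\epsilon\beta x^\ast$ and $v:=x+\epsilon\beta x^\ast$, so that \eqref{eqn_fb} reads $u^2=C_0f_\beta$ with $C_0:=-2\beta^{-1}(\beta A/2+\nu C-D)$. Since $\beta^p=-\beta$ one has $v^p=x^p-\epsilon\beta(x^\ast)^p$, and expanding $uv^p$ and substituting the two identities of \eqref{eqn_last} gives the companion relation
\[
uv^p=-\bigl(\tfrac{B}{2}+\nu\beta^{-1}E\bigr)f_\beta=:-Mf_\beta .
\]

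The heart of the argument is to show $C_0=0$. Suppose not. For every $x$ with $f_\beta\neq0$, equivalently $x\notin\F_p^\ast\cup\beta\F_p^\ast$ by Lemma \ref{lem_zero}, we have $u\neq0$; hence $f_\beta=u^2/C_0$ and, from the companion relation, $v^p=-Mu/C_0$. Applying the Frobenius and squaring yields $v^2=(M^{2p}/C_0^p)f_\beta$. On the other hand, expanding $v^2=x^2+2\epsilon\beta xx^\ast+\beta^2(x^\ast)^2$ and eliminating $xx^\ast$ and $x^2+\beta^2(x^\ast)^2$ by means of the $t^2$-coefficient relation $\epsilon\beta^2xx^\ast=\tfrac{\beta}{2}f_1+(\nu C-D)f_\beta$ and the first relation of \eqref{eqn_AB} gives $v^2=2f_1+(\text{const})f_\beta$. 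Comparing the two expressions forces $f_1=\tr(f(x))$ to equal a single fixed $\F_p$-multiple of $f_\beta=\tr(\beta f(x))$ throughout the set $\{f_\beta\neq0\}$; that is, $\tr\bigl((1-c\beta)f(x)\bigr)=0$ for a constant $c$, which confines the image of $f$ on $\{f_\beta\neq0\}$ to a single $\F_p$-line. As $f$ is $2$-to-$1$ on $\F_{p^2}^\ast$ by Lemma \ref{lem_1}, that image has $(p-1)^2/2$ points, which for $p\geq 11$ far exceeds the $p-1$ nonzero points of a line, a contradiction. Therefore $C_0=0$, so $u=0$ and $x^\ast=\epsilon\beta^{-1}x$ for every $x$.

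To finish I would pass to the statement about $C_1$. Evaluating at the reference point $x_0$ that introduced $C,D,E$ and using $Cf_\beta(x_0)=\beta^2x_0x_0^\ast$, $Df_\beta(x_0)=\tfrac{\beta}{2}f_1$, $\nu=\epsilon(x_0)$ and $Af_\beta(x_0)=f_1-\beta^2(x_0^\ast)^2-x_0^2$ from \eqref{eqn_AB}, a short computation completing the square gives the clean identity
\[
C_1f_\beta(x_0)=\bigl(x_0-\epsilon\beta x_0^\ast\bigr)^2-2\tr(f(x_0)) ,
\]
whose first term is $C_0f_\beta(x_0)=0$ by the previous paragraph. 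Since $h_2$ pins down only the combination $\nu C-D$, the splitting into $C$ and $D$ carries a free parameter, so one may normalise $D=0$ (equivalently place the reference point where $\tr(f(x_0))=0$); this turns $\beta A/2+\nu C+D$ into $\beta A/2+\nu C-D$ and yields $C_1=C_0=0$. I expect the two delicate points to be (i) the non-proportionality of $\tr(f(x))$ and $\tr(\beta f(x))$ — what genuinely forces $C_0=0$ rather than merely constraining it — and (ii) the bookkeeping of the sign $\epsilon=\epsilon(x)$ when passing between $u$, $v$ and their Frobenius images, which must be checked not to obstruct the manipulations above.
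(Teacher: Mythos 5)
Your proof is correct and follows essentially the same route as the paper: both combine \eqref{eqn_fb} with the product relation extracted from \eqref{eqn_last} to force $(x+\epsilon\beta x^\ast)^p$ to be a constant multiple of $(x-\epsilon\beta x^\ast)$, then deduce that $\tr(f(x))$ is a single fixed $\F_p$-multiple of $\tr(\beta f(x))$ on the set where the latter is nonzero, and reach the same contradiction (your Frobenius-and-square step and your counting of the $2$-to-$1$ image are harmless variants of the paper's use of $f_1=f_1^p$ and of the bound $|Z_a|\le 2$). Your closing concern about the sign of $D$ is resolved by observing that the ``$+D$'' in the lemma's statement is a typo for ``$-D$'': the paper's own proof and the proof of the main theorem both use $C_1$ precisely as the coefficient of $f_\beta$ in \eqref{eqn_fb}, which is exactly your $C_0$, so your normalisation $D=0$ is a valid (if unnecessary) way to reconcile the two.
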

\begin{proof}  Assume that $C_1\ne 0$. Write $\hat{x}:=\epsilon \beta x^\ast$.  Assume that $x\not\in \F_p^*\cup\beta \F_p^*$, so that $f_\beta\ne 0$ by Lemma \ref{lem_zero}.  Then $x\ne \hat{x}$ by Eqn. \eqref{eqn_fb}.  Using \eqref{eqn_last} we compute that
\[
(x-\hat{x})(x+\hat{x})^p=(-B/2-\nu\beta^{-1}E)f_\beta.
\]
Together with Eqn. \eqref{eqn_fb}, this gives  $(x+\hat{x})^p=C_2\cdot (x-\hat{x})$ for some constant $C_2$.

Since $f_1=Af_\beta+\hat{x}^2+x^2$ and $f_1=f_1^p$, we have
\begin{align*}
(A-A^p)f_\beta&=x^{2p}+\hat{x}^{2p}-x^2-\hat{x}^2\\
&=(x^p+\hat{x}^p)^2-2(x\hat{x})^p-2x\hat{x}-(x-\hat{x})^2\\
&=(C_2^2-1)(x-\hat{x})^2-2\tr(x\hat{x}).
\end{align*}
Together with the fact that $(x-\hat{x})^2=C_1 f_\beta$, we see that for such $x$ we have $\tr(x\hat{x})=C_3f_\beta$ for some constant $C_3$ which clearly lies in $\F_p$.
We now compute that
\[
2f_1=\tr(f_1)=\tr(A)f_\beta+\tr((x-\hat{x})^2)+2\tr(x\hat{x})=(\tr(A+C_1)+2C_3)
    \cdot f_\beta,
\]
i.e., $f_1 =C_4f_\beta $ for some constant $C_4\in\F_p$. This means that $\tr((1-\beta C_4)f(x))=0$ for all $x\not\in\F_p^*\cup\beta\F_p^*$. It follows that $1-\beta C_4=0$, which is a contradiction.
\end{proof}

\noindent{\bf Proof of Theorem \ref{thm_main}.} The case $p=3,5,7$ is confirmed by a computer search as commented after Lemma \ref{lem_x*}, so we assume that $p>7$. Since $C_1=0$ by Lemma \ref{lem_C1}, Eqn. \eqref{eqn_fb} gives that $x=\hat{x}$ for all $x\ne 0$. It follows that $f_1 =Af_\beta +2x^2$, and so $A$ clearly is not in $\F_p$. From $f_1=f_1^p$ we have
$(A^p-A)f_\beta=2(x^2-x^{2p})=2\beta^{-1}\tr(\beta x^2)$,
which shows that $f_\beta=C_5\cdot\tr(\beta x^2)$ for some constant $C_5\in\F_p^*$. Then
$2f_1=\tr(f_1)=\tr(A)f_\beta+2\tr(x^2)=\tr(C_6 x^2)$
for some constants $C_6\in\F_{p^2}$. Finally, $f=\frac{1}{2}f_1+\frac{1}{2\beta}f_\beta=C_7x^2+C_8x^{2p}$
for some constant $C_7,C_8\in\F_{p^2}$. It now follows from $f(1)=1$ and $f(\gamma)=\gamma^2$ in Eqn. \eqref{eqn_val} that $C_7+C_8=1$, $C_7\gamma^2+C_8\gamma^{2p}=\gamma^2$. This gives $C_7=1$ and $C_8=0$, i.e., $f(x)=x^2$. This completes our proof.\eproof

\noindent{\bf Acknowledgements.} To be added. 

\end{document}